\documentclass[11pt,a4paper]{amsart}



\textwidth 15cm \oddsidemargin 0.75cm \evensidemargin 0.75cm
\addtolength{\textheight}{2cm} \addtolength{\topmargin}{-1cm}

\usepackage[utf8]{inputenc}
\usepackage[T1]{fontenc}
\usepackage[english]{babel}


\usepackage{graphicx, pstricks, pst-plot, pst-math,esint} 
\usepackage{amsfonts}
\usepackage{amsmath, latexsym}
\usepackage{amssymb,verbatim}
\usepackage{hyperref,pdfsync}
\usepackage{graphicx}
\usepackage{color}
\usepackage{mathtools}

\newtheorem{lem}{Lemma}
\newtheorem{pro}{Proposition}
\newtheorem{thm}{Theorem}
\newtheorem{cor}{Corollary}
\newtheorem{df}{Definition}

\newtheorem{rem}{Remark}

\newcommand{\FF}{{\mathcal F}}


%
%

\def\XXint#1#2#3{{\setbox0=\hbox{$#1{#2#3}{\int}$}
     \vcenter{\hbox{$#2#3$}}\kern-.5\wd0}}


\newcommand {\den} {{\bf e}_\eps}
\newcommand {\ka}{\kappa}
\newcommand {\bun}{\beta_1}
\newcommand {\bd}{\beta_2}
\newcommand {\cd}{\mathcal{D}}

\newcommand{\RR}{\mathbb{R} }
\newcommand{\R}{\mathbb{R}}
\newcommand{\Z}{\mathbb{Z}}

\newcommand{\eps}{\varepsilon}

\newcommand{\ds}{\displaystyle}
\newcommand{\h}{{\mathcal{H}}}
\newcommand{\p}{{\mathcal{P}}}
\newcommand{\Hh}{{\mathcal{H}}}

\newcommand{\be}{\begin{equation}}
\newcommand{\ee}{\end{equation}}

\newcommand{\nd}{\noindent}

\newcommand{\Om}{\Omega}
\newcommand{\om}{\omega}

\newcommand{\hul}{\mathop{\rm hull\, }}

\newcommand{\degr}{\operatorname{deg}}

\newcommand{\dt}{\partial_t}

\newcommand{\tendf}{\rightharpoonup}

\newcommand{\ma}{{m_{1,\infty}}}

\newcommand{\md}{m_\delta}
\newcommand{\vd}{v_\delta}

\newcommand{\Ss}{\mathbb{S}}
\newcommand{\Tt}{\mathbb{T}}

\newcommand{\loc}{\mathrm{loc}}

\newcommand{\q}[1]{\mathcal{#1}}
\newcommand{\m}[1]{\mathbb{#1}}
\newcommand{\mr}[1]{\mathrm{#1}}
\newcommand{\e}{\varepsilon}

\parindent=0pt

\begin{document}

\title[A thin-film limit for N\'eel walls in LLG equation]{A thin-film limit in the Landau-Lifshitz-Gilbert equation relevant for the formation of N\'eel walls }

\author{Rapha\"el C\^ote}
\address{Rapha\"el C\^ote \and Evelyne Miot \\
Centre de Math\'ematiques Laurent Schwartz, UMR 7640, \'Ecole Polytechnique \\ 
91128 Palaiseau, France}
\email{cote@math.polytechnique.fr \\
evelyne.miot@math.polytechnique.fr}
\author{Radu Ignat}
\address{Radu Ignat \\ Institut de Math\'ematiques de Toulouse, Universit\'e Paul Sabatier \\
118 Route de Narbonne, 31062 Toulouse, France}
\email{radu.ignat@math.univ-toulouse.fr} 
\author{Evelyne Miot}
\dedicatory{Dedicated to our Professor Ha\"im Brezis on his seventieth anniversary\\ with esteem
}

\subjclass[2010]{Primary 35K40; Secondary 35Q60, 35B36, 49Q20}
\keywords{transition layers, compactness, micromagnetism, Neel wall, Landau-Lifshitz-Gilbert equation}

\begin{abstract} We consider an asymptotic regime for two-dimensional ferromagnetic films that is consistent with the formation of  transition layers, called 
N\'eel walls. We first establish compactness of $\Ss^2$-valued magnetizations in the energetic regime of N\'eel walls and characterize the set of accumulation points. We then prove that N\'eel walls are asymptotically the unique energy minimizing configurations. We finally study the corresponding dynamical issues, namely the compactness properties of the magnetizations under the flow of the Landau-Lifshitz-Gilbert equation.
\end{abstract}

\maketitle

\section{Introduction and main results}

The purpose of this paper is to study an asymptotic regime for two-dimensional ferromagnetic thin films allowing for the occurrence and persistence of special transition layers called N\'eel walls. We will prove compactness, optimality and energy concentration of N\'eel walls, together with dynamical properties driven by the Landau-Lifschitz-Gilbert equation.

\subsection{A two-dimensional model for thin-film micromagnetics} We will focus on the following $2D$ model for thin ferromagnetic films. For that, let
$$\Omega=\RR \times \Tt \quad \textrm{with} \quad \Tt=\RR/\mathbb{Z},$$ be a two-dimensional horizontal section of a magnetic sample that is infinite in $x_1$-direction and periodic in $x_2$-direction. The admissible magnetizations are vector fields
\[ m=(m',m_3): \Omega \to \mathbb{S}^2, \quad m'=(m_1, m_2),\]
that are periodic in $x_2$-direction (this condition is imposed in order to rule out lateral surface charges) and connect two macroscopic directions forming an angle, i.e., for a fixed $\ma\in [0,1)$,
\be \label{1b} 
m(x_1, x_2)= \begin{cases}
m_{-\infty} & \text{for } x_1 \le -1, \\
m_{+\infty} & \text{for } x_1 \ge 1,
\end{cases} \quad \text{where }\quad m_{\pm \infty} = \begin{pmatrix}
\ma \\  \pm \sqrt{1-\ma^2}\\0
\end{pmatrix}.
\ee
We will consider the following micromagnetic energy approximation in a thin-film regime that is written in the absence of crystalline anisotropy and external magnetic fields (see e.g. \cite{DKMO_reduced}, \cite{Kohn-Slastikov}):
\be \label{definitia} 
E_\delta(m)=
\int_{\Omega}\bigg( |\nabla m|^2  + \frac{1}{\eps^2} m_3^2\, \bigg)dx+ \frac 1 \delta  \int_{\Omega\times \R} |h(m')|^2 \, dx dz , \ee 
where $\delta>0$ and $\eps=\eps(\delta)>0$ are two small parameters.  The first term in \eqref{definitia} is called the
exchange energy, while the other two terms stand for the stray field energy created by the surface charges $m_3$ at the top and bottom of the sample and by the volume charges $\nabla\cdot m'$ in the interior of the sample. More precisely, the
stray-field $h(m'):\Omega\times \R \to \R^3$ generated only by the volume
charges is defined as
the unique $L^2(\Omega\times \R, \R^3)-$gradient field
\begin{equation*}
  h(m')=(\nabla, \frac{\partial}{\partial z}) U(m')
\end{equation*}
that is $x_2$-periodic and is determined by static Maxwell's equation in the weak sense\footnote{In other words, $h(m')$ is the Helmholtz projection of the vector measure $m' \h^2\llcorner \Omega\times \{0\}$ onto the $L^2(\Omega\times \R)$-space of gradient fields. }: For all $\zeta \in
C_c^\infty(\Omega\times \R)$,
\begin{align}
\label{eqh}
  \int_{\Omega\times \R} (\nabla, \frac{\partial}{\partial z}) U(m') \cdot (\nabla,
  \frac{\partial}{\partial z}) \zeta \ dx dz = \int_{\Omega} m' \cdot \nabla \zeta \
  dx.
\end{align}
Explicitly solving \eqref{eqh} by use\footnote{Given 
a function $\zeta:\Omega\to \R$ which is $1$-periodic in $x_2$, we introduce the combination of Fourier 
transformation in $x_1$ and Fourier series in $x_2$ by 
$\nd \FF(\zeta)(\xi) = \frac{1}{\sqrt{2\pi}} \int_{\Omega} e^{-i\xi\cdot x} \zeta(x)\, dx$ where $\xi\in\R\times {2\pi}\Z$. } of the Fourier transform $\FF(\cdot)$, the stray-field energy can be
equivalently expressed in terms of the homogeneous $\dot H^{-1/2}-$norm of $\nabla
\cdot m'$ (see e.g. \cite{IK})\footnote{One computes that $\FF(U(m')(\cdot, z))(\xi)=-\frac{1}{2|\xi|}e^{-|\xi|\, |z|} \FF(\nabla \cdot m')(\xi)$ for $\xi\neq 0$ and $z\in \R$.}:
\begin{align}
\label{eqh2}
  \int_{\Omega\times \R} |h(m')|^2 \, dx dz=\frac 1 2 \int_{\R\times {2\pi}\Z} \frac{1}{|\xi|}
  |\FF(\nabla \cdot m')(\xi)|^2\, d\xi=\frac 1 2 \int_{\Omega}||\nabla|^{1/2}\Hh(m')|^2\, dx,\end{align}
   where
\[ \Hh(m')=-\nabla (-\Delta)^{-1}\nabla \cdot m', \quad \textrm{i.e., }\quad \FF(\Hh(\cdot))(\xi)=\frac{\xi\otimes\xi}{|\xi|^2}, \quad \xi\in \R\times {2\pi}\Z\setminus\{(0,0)\},\]
so that the gradient of the energy $E_\delta(m)$ is given by
\begin{equation} \label{nablaE}
\nabla E_{\delta}(m)=- 2 \Delta m+\bigg(\frac{1}{\delta}(-\Delta)^{1/2}  \Hh(m'),\frac{2m_{3}}{\eps^2}\bigg).
\end{equation}
Here{ and in the following}, we denote planar coordinates by $x=(x_1, x_2)$,
$(x_1,x_2)^\perp =(-x_2,x_1)$, the vertical coordinate by $z$ and furthermore, we
write $(\nabla, \frac{\partial}{\partial z})=( \frac{\partial}{\partial x_1},
\frac{\partial}{\partial x_2}, \frac{\partial}{\partial z})$ and $\Delta=\frac{\partial^2}{\partial x_1^2}+\frac{\partial^2}{\partial x_2^2}$.

\bigskip
In this model, we expect two types of singular patterns: N\'eel walls and vortices (so-called Bloch lines in micromagnetic jargon). These patterns result from the competition between the different contributions in the total energy $E_\delta(m)$ and the nonconvex constraint $|m|=1$. We explain
 these structures in the following and compare their respective energies (for more details, see DeSimone, Kohn, M\"uller and Otto \cite{DeSKMO}).
 
 \medskip

 {\nd \bf N\'eel walls.} 
The N\'eel wall is a dominant transition layer in thin
ferromagnetic films. It is characterized by a one-dimensional
in-plane rotation connecting two directions \eqref{1b} of the
magnetization. More precisely, it is a one-dimensional transition
$m=(m_1, m_2):\RR \to \Ss^1$ that minimizes the energy under the boundary constraint \eqref{1b}:
$$E_{\delta}(m)=\int_{\RR} \bigg|\frac{dm}{dx_1} \bigg|^2\, dx_1+
\frac{1}{2\delta} \int_{\RR} \bigg|\, \bigg|\frac{d}{dx_1}\bigg|^{1/2}m_1 \bigg|^2 \, dx_1.$$ 
It follows that the minimizer is a two length scale object: it has a small core with fast
varying rotation and two logarithmically decaying tails.\footnote{In our model, the tails are contained by the system thanks to the confining mechanism of steric interaction with the sample
edges placed at $x_1=\pm 1$.} As $\delta\to 0$, the scale of the N\'eel core is given by $|x_1|\lesssim w_{core}=O(\delta)$ (up to a logarithmic scale in $\delta$) while the two logarithmic
 decaying tails scale as $w_{core}\lesssim|x_1|\lesssim w_{tail}=O(1)$. The energetic cost (by unit length)
 of a N\'eel wall is given by
 \begin{align*}
   E_\delta(\text{N\'eel wall})=\frac{\pi(1-\ma)^2+o(1)}{2\delta |\log \delta|} \quad \textrm{as} \quad \delta \to 0,
 \end{align*}
 (see e.g. \cite{DeSKMO}, \cite{I}).

 \medskip

 {\nd \bf Micromagnetic vortex.} A vortex point corresponds in our model to a topological singularity at the microscopic level where the
 magnetization points out-of-plane. The prototype of a vortex configuration is given by a vector field
 $m: B^2\to \Ss^2$ defined in a unit disk $\Omega=B^2$ of
 a thin film that satisfies:
 \begin{align*}  %
   \nabla\cdot m'=0 \textrm{ in } B^2 \quad \text{ and } \quad m'(x)={x^\perp} \textrm{ on
   }\partial B^2
 \end{align*}
 and minimizes the energy \eqref{definitia}:\footnote{In our model, the parameter $\eps=\eps(\delta)>0$ is related to $\delta$ by the regime \eqref{reg_eta_eps}. }
 $$
E_\delta(m)=\int_{B^2} |\nabla m|^2 \, dx +\frac{1}{\eps^2} \int_{B^2} m^2_3\, dx.$$
Since the magnetization turns in-plane at
 the boundary of the disk $B^2$ (so, $\degr(m', \partial \Omega)=1$), a localized region is created, that is the core of the vortex of size $\eps$, where the magnetization becomes indeed perpendicular to the horizontal plane. Remark that the energy $E_\delta$ controls the Ginzburg-Landau energy, i.e., 
\[ E_\delta(m)\geq \int_{B^2} \den(m')\, dx, \, \textrm{ with }\den(m')= |\nabla m'|^2+\frac{1}{\eps^2} (1-|m'|^2)^2 \]
since $|\nabla (m', m_3)|^2\geq |\nabla m'|^2$ and $m_3^2\geq m_3^4=(1-|m'|^2)^2$.
Due to the similarity with vortex points in Ginzburg-Landau type functionals (see the seminal book of Bethuel, Brezis and H\'elein \cite{BBH-book}), the energetic cost of a micromagnetic vortex is given by 
\begin{align*}
  E_\delta(\text{Vortex})=2\pi{|\log \eps |}+O(1),
\end{align*}
(see e.g. \cite{IGNAT-XEDP}).

\medskip

{\nd \bf Regime.} We will focus on an energetic regime allowing for N\'eel walls, but excluding vortices. More precisely, we will assume that $\delta \to 0$ and $\eps=\eps(\delta)\to 0$ such that
\be \label{reg_eta_eps}
 \frac{1}{\delta|\log \delta|}=o\left( |\log \eps|\right)
\ee
and we will consider families of magnetization $\{m_\delta\}_{0<\delta<1/2}$ satisfying the energy bound
\begin{equation}
\label{regime:neel}
\sup_{\delta\to 0} \delta |\log \delta| E_\delta(m_\delta)<+\infty. 
\end{equation}
In particular, \eqref{reg_eta_eps} implies that the size $\eps$ of the vortex core is exponentially smaller than the size of the N\'eel wall core $\delta$, i.e., $\eps=O( e^{-\frac{1}{\delta |\log \delta|}})$.

\bigskip

{\nd \bf Compactness of N\'eel walls.}
We first show that the energetic regime \eqref{regime:neel} is indeed favorable for the formation of N\'eel walls. We start by proving a compactness result for $\Ss^2$-magnetizations in the regime \eqref{reg_eta_eps} and \eqref{regime:neel} that is reminiscent to the compactness results of Ignat and Otto in \cite{IO1} and \cite{IO2}.

\begin{thm}
\label{thm_comp}
Let $\delta>0$ and $\eps(\delta)>0$ satisfy the regime \eqref{reg_eta_eps}. Let $m_\delta\in H^1_\loc(\Omega,\mathbb{S}^2)$ satisfy \eqref{1b} and \eqref{regime:neel}.
Then $\{m_\delta\}_{\delta\to 0}$ is relatively compact in $L_{\loc}^2(\Om)$ and any limit 
$m:\Omega\to \Ss^2$ satisfies the constraints \eqref{1b} and 
\[ |m'|=1, \quad m_3=0, \quad \nabla\cdot m'=0 \, \, \textrm{ in } \, {\mathcal{D}}'(\Omega). \]
 \end{thm}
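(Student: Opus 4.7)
The bound \eqref{regime:neel} splits into three parts. From the anisotropy term one has $\int_\Omega m_{\delta,3}^2\, dx \leq C\eps^2/(\delta|\log\delta|)$; since by \eqref{reg_eta_eps} $\eps$ is exponentially smaller than $\delta|\log\delta|$, this yields $m_{\delta,3}\to 0$ strongly in $L^2(\Omega)$. The stray-field identity \eqref{eqh2} gives $\|\nabla\cdot m'_\delta\|_{\dot H^{-1/2}}^2 \leq C/|\log\delta|\to 0$. Finally, the exchange term provides only the weak bound $\int|\nabla m_\delta|^2 \leq C/(\delta|\log\delta|)$, which blows up as $\delta\to 0$.

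\textbf{Step 2 (Weak limit and passage in the easy constraints).} Since $|m_\delta|=1$, by a diagonal extraction we find a subsequence and $m=(m',0)$ with $|m'|\leq 1$ a.e.\ such that $m_\delta \rightharpoonup m$ weakly-$*$ in $L^\infty$. The strong $L^2$-convergence $m_{\delta,3}\to 0$ gives $m_3=0$; the vanishing of $\|\nabla\cdot m'_\delta\|_{\dot H^{-1/2}}$ lets us pass to the limit in $\int m'_\delta\cdot\nabla\zeta\,dx$ for every $\zeta\in C_c^\infty(\Omega)$, so $\nabla\cdot m'=0$ in $\mathcal{D}'(\Omega)$. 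The boundary conditions \eqref{1b} outside the strip $[-1,1]\times\Tt$ are preserved in the weak limit.

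\textbf{Step 3 (Strong $L^2_{\loc}$-compactness and $|m'|=1$).} This is the core of the theorem and the plan is to reduce to an $\Ss^1$-valued situation and invoke the compactness machinery of Ignat--Otto \cite{IO1,IO2}. Since $m_{\delta,3}\to 0$ strongly, $|m'_\delta|=\sqrt{1-m_{\delta,3}^2}\to 1$ in every $L^p_{\loc}$, $p<\infty$. I define the projected field $\tilde m_\delta := m'_\delta/|m'_\delta|$ (well-defined on an asymptotically full-measure set) and check that $\|m'_\delta-\tilde m_\delta\|_{L^2_{\loc}}\to 0$ and that $\tilde m_\delta$ inherits an exchange bound of the same order $1/(\delta|\log\delta|)$ and $\nabla\cdot\tilde m_\delta\to 0$ in $\dot H^{-1/2}_{\loc}$, after controlling corrections by factors of $m_{\delta,3}$ and $|\nabla m_{\delta,3}|$. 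Then the Ignat--Otto compactness criterion, stated precisely for the N\'eel scaling, yields relative compactness of $\{\tilde m_\delta\}$ (hence of $\{m'_\delta\}$) in $L^2_{\loc}(\Omega)$. A passage along an a.e.\ subsequence finally upgrades $|m'_\delta|\to 1$ into $|m'|=1$ a.e.

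\textbf{Main obstacle.} The difficulty is concentrated in Step 3: since the exchange energy is unbounded, Rellich--Kondrachov is unavailable, and strong compactness has to be wrung out of the interplay between the logarithmically-weighted exchange and the vanishing of $\nabla\cdot m'_\delta$ in $\dot H^{-1/2}$. The essential input is that \eqref{reg_eta_eps} rules out microscopic vortices (whose cost $\sim|\log\eps|$ overwhelms the available budget $1/(\delta|\log\delta|)$), which is what allows one to treat $\tilde m_\delta$ as a genuine $\Ss^1$-field amenable to a local lifting $\tilde m_\delta=(\cos\theta_\delta,\sin\theta_\delta)$ and to the entropy/compensated-compactness argument of \cite{IO1,IO2}. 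Transporting that argument from the $\Ss^1$-valued framework to our $\Ss^2$-valued setting via the comparison $m'_\delta \approx \tilde m_\delta$ is precisely the technical crux.
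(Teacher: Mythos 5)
Your Steps 1 and 2 are fine, and your overall strategy (reduce to an $\Ss^1$-valued field and quote the Ignat--Otto compactness theorem) is indeed the approach the paper takes. But Step 3 has a genuine gap: the naive projection $\tilde m_\delta = m'_\delta/|m'_\delta|$ does \emph{not} inherit an exchange bound of order $1/(\delta|\log\delta|)$. You observe that $m_{\delta,3}\to 0$ strongly in $L^2$, but this is only an $L^2$ bound, not $L^\infty$: the set where $|m'_\delta|$ is small has tiny measure, yet on that set the gradient of $m'_\delta/|m'_\delta|$ can blow up without any control. In fact around a vortex core (where $m'_\delta$ vanishes) $m'_\delta/|m'_\delta|$ has infinite Dirichlet energy. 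Crucially, a vortex core costs only $O(1)$ in the anisotropy term $\frac{1}{\eps^2}\int m_{\delta,3}^2$, which sits comfortably inside the available budget $C/(\delta|\log\delta|)$; so the $L^2$ smallness of $m_{\delta,3}$ does not rule out vortices. What rules them out is the \emph{exchange} cost $\sim|\log\eps|$ compared against the budget $C/(\delta|\log\delta|)$ via \eqref{reg_eta_eps}, and turning this heuristic into a statement about $|m'_\delta|$ being uniformly bounded away from zero requires real work.

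The paper's proof replaces the pointwise projection by the grid construction of Theorem \ref{lem_approx}: one chooses (by pigeonhole) a squared grid of mesh $\eps^\beta$ on which the Ginzburg--Landau-type energy density $\den(m'_\delta)$ is controlled on lines, replaces $m'_\delta$ inside each cell by the Ginzburg--Landau minimizer $u_\delta$ with boundary data $m'_\delta$, and only then sets $M_\delta = u_\delta/|u_\delta|$. The key input making this legitimate is the uniform estimate of Theorem \ref{thm_main_GL} in the Appendix (a Pohozaev-type argument in the spirit of Bethuel--Brezis--H\'elein), which shows $\sup\big||u_\delta|-1\big| = o(1)$ and that the degree of $m'_\delta$ on each cell boundary vanishes; this is precisely where \eqref{reg_eta_eps} enters. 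The quantitative closeness $\|M_\delta - m'_\delta\|_{L^2}\leq C\eps^{2\beta}E_\delta(m_\delta)$ then comes from Poincar\'e inequalities cell by cell, not from the smallness of $m_{\delta,3}$. Also note that the $\dot H^{-1/2}$ comparison between $h(M_\delta)$ and $h(m'_\delta)$ is a global quantity obtained by interpolation from the $L^2$ and $\dot H^1$ comparisons; your claim that $\nabla\cdot\tilde m_\delta\to 0$ in ``$\dot H^{-1/2}_{\loc}$'' glosses over the nonlocality. So your plan identifies the right obstacle but does not actually overcome it; the paper's cell-wise GL-minimization and the uniform $|u|\approx 1$ estimate are the missing mechanism.
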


\medskip
The proof of compactness is based on an argument of approximating
$\Ss^2$-magnetizations by $\Ss^1$-valued magnetizations having the same level of energy (see Theorem \ref{lem_approx}). Such an approximation is possible due to our regime \eqref{reg_eta_eps} and \eqref{regime:neel} that excludes existence of topological point defects.

\bigskip

{\nd \bf Optimality of the N\'eel wall.}
Our second result proves the optimality of the N\'eel wall, namely that the N\'eel wall is the unique asymptotic minimizer of $E_\delta$ over $\Ss^2$-magnetizations within the boundary condition \eqref{1b}. For every magnetization $m:\Om\to \mathbb{S}^2$, we associate the energy density $\mu_\delta(m)$ as a non-negative $x_2$-periodic measure on
$\Omega\times \RR$ via 
\be
 \int_{\Omega\times \RR } \zeta \, d
\mu_\delta(m):= \frac{2}{\pi} \delta |\log \delta| \bigg(\int_{\Omega} \zeta(x,0)
\big(|\nabla m|^2+\frac 1 {\eps^2} m_{3}^2\big)\, dx+\frac 1 \delta \int_{\Omega \times \RR} \zeta |h(m')|^2\, dxdz\bigg),  \label{energydef}
\ee
for every $\zeta=\zeta(x,z)\in C_c(\Omega \times \RR)$. Recall that $h(m')$ denotes the $x_2$-periodic stray-field associated to $m'$ via \eqref{eqh}. We now show that the straight walls \eqref{1e} are the unique minimizers of $E_\delta$ as $\delta \to 0$ in which case the energy density $\mu_\delta$ is concentrated on a straight line in $x_2$-direction.

\begin{thm}
\label{thm:optimality}
Let $\delta>0$ and $\eps(\delta)>0$ satisfy the regime \eqref{reg_eta_eps}. Let $m_\delta\in H^1_\loc(\Omega,\mathbb{S}^2)$ satisfy \eqref{1b} and
\be
\label{hypo}
\limsup_{\delta\to 0} \delta|\log \delta| E_\delta(m_\delta)\leq \frac{\pi}{2} (1-\ma)^2.
\ee
Then there exists a subsequence $\delta_n \to 0$ such that $m_{\delta_n}\to m^*$ in $L_{loc}^2(\Om)$ where $ m^*$ is a straight
wall given by 
\be \label{1e} 
m^*(x_1,x_2)= \begin{cases}
m_{-\infty} & \text{for } x_1 < x_1^*, \\
m_{+\infty} & \text{for } x_1 > x_1^*,
\end{cases} \quad \text{ for some } x_1^* \in [-1,1].
\ee
In this case we have the concentration of the measures defined at \eqref{energydef} on the jump line of $m^*$:
\[ \mu_{\delta_n}(m_{\delta_n}) \rightharpoonup (1-\ma)^2 \, {\mathcal H}^1\llcorner \{x_1^*\}\times \Tt \times\{0\} \quad\textrm{weakly  $^*$ in } \, {\mathcal M}(\Om\times \R). \]
\end{thm}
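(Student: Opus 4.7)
The strategy is to promote \eqref{hypo} into an equality by establishing a matching sharp lower bound; the asymptotic value $\tfrac{\pi}{2}(1-\ma)^2$ is precisely the rescaled cost of a single 1D N\'eel wall carried by the straight configuration \eqref{1e}, and its saturation will force both the rigidity of the limit and the concentration of the energy densities. Since \eqref{hypo} implies \eqref{regime:neel}, Theorem~\ref{thm_comp} provides a first subsequence $m_{\delta_n}\to m^*$ in $L^2_\loc(\O)$ with $|m^{*\prime}|=1$, $m^*_3=0$, $\nabla\cdot m^{*\prime}=0$ in $\mathcal{D}'(\O)$, and $m^*$ satisfying \eqref{1b}.

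The heart of the argument is the lower bound
\[
\liminf_{n\to\infty} \delta_n|\log\delta_n|\,E_{\delta_n}(m_{\delta_n})\ \geq\ \frac{\pi}{2}(1-\ma)^2.
\]
I would first use Theorem~\ref{lem_approx} to reduce to $\Ss^1$-valued competitors with the same asymptotic energy; this is licit in the regime \eqref{reg_eta_eps}--\eqref{regime:neel}, which rules out vortices. For $\Ss^1$-valued magnetizations, an entropy method in the spirit of Ignat--Otto \cite{IO1,IO2} applies: one constructs an entropy $\Phi\colon\Ss^1\to \RR^2$ such that $\nabla\cdot(\Phi\circ m^{*\prime})$ is controlled by the rescaled energy, and whose jump across any interface between $m_{-\infty}$ and $m_{+\infty}$ equals exactly $(1-\ma)^2$ per unit length; the $x_2$-periodicity forces the jump set to have $\mathcal{H}^1$-length at least one, which yields the inequality. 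Combined with \eqref{hypo} this gives equality, and the rigidity in the entropy inequality forces the jump set of $m^{*\prime}$ to be a single horizontal segment $\{x_1^*\}\times\Tt$ with $x_1^*\in[-1,1]$. The conditions $\nabla\cdot m^{*\prime}=0$ (admissible since $m_{-\infty}$ and $m_{+\infty}$ share the normal component $\ma$) and $|m^{*\prime}|=1$ then leave only the straight wall \eqref{1e} as a possible limit.

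For the concentration of the energy measures, the total mass of $\mu_{\delta_n}$ is $\tfrac{2}{\pi}\delta_n|\log\delta_n|E_{\delta_n}(m_{\delta_n})\to (1-\ma)^2$ by the preceding step, so up to further extraction $\mu_{\delta_n}\rightharpoonup \mu^*$ weakly-$*$ in $\mathcal{M}(\O\times\RR)$ with $\mu^*(\O\times\RR)=(1-\ma)^2$. Outside any neighborhood of $\{x_1^*\}\times\Tt\times\{0\}$, $m_{\delta_n}$ converges strongly to the locally constant value $m_{\pm\infty}$ according to the sign of $x_1-x_1^*$; localizing the entropy lower bound shows that the exchange and anisotropy contributions to $\mu_{\delta_n}$ vanish there, while $h(m'_{\delta_n})$ also tends to zero with the exponential $z$-decay encoded in the Poisson representation behind \eqref{eqh2}. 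Hence $\supp \mu^*\subset \{x_1^*\}\times\Tt\times\{0\}$, and $x_2$-translation invariance of the problem together with the correct total mass identifies $\mu^*=(1-\ma)^2\,\mathcal{H}^1\llcorner(\{x_1^*\}\times\Tt\times\{0\})$.

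The main obstacle is the sharp lower bound. Naive $x_2$-slicing over-estimates the nonlocal 2D stray-field energy, while $x_2$-averaging loses the $\Ss^1$ constraint (the averaged profile could degenerate to a straight segment in the ball at $O(1)$ cost), so neither approach gives the sharp constant directly. Designing an entropy $\Phi$ whose jump across a horizontal wall is exactly $(1-\ma)^2$, transferring the sharp 1D N\'eel wall analysis of \cite{DeSKMO,I} to the 2D nonlocal setting, and handling the $\eps$-regularization of the $m_3$-anisotropy together form the technical core of the argument.
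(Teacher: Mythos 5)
Your overall strategy is the right one and essentially the same as the paper's: reduce to $\Ss^1$-valued competitors via the approximation Theorem~\ref{lem_approx} (with the energy comparison \eqref{main_ineg} and the closeness bound \eqref{cond-approx}), then transfer the sharp $\Ss^1$-result. The crucial difference is that you stop short of the key step. The paper does \emph{not} re-derive the lower bound and rigidity for $\Ss^1$-maps; it simply invokes Theorem~1 of \cite{IO1}, which already proves that an $\Ss^1$-valued family satisfying \eqref{1b} and \eqref{hypo} converges (along a subsequence) to a straight wall, with the associated energy measure $\mu_{\delta_n}(M_{\delta_n})\rightharpoonup (1-\ma)^2\,\mathcal{H}^1\llcorner\{x_1^*\}\times\Tt\times\{0\}$. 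You instead sketch an entropy argument and then explicitly declare in your last paragraph that the sharp lower bound (entropy construction, transfer of the 1D wall cost, handling of the $\e$-anisotropy) is an open "technical core" you have not carried out. That is exactly the part that the citation to \cite{IO1} replaces: so as written, your proposal has a genuine gap precisely where you say it does, but the gap is closed by citation rather than by new work.

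Your treatment of the measure concentration also contains two errors that would sink the argument if the rest were complete. First, the claim that $x_2$-translation invariance of the problem identifies the limit measure as $(1-\ma)^2\,\mathcal H^1\llcorner(\{x_1^*\}\times\Tt\times\{0\})$ is not valid: the weak-$*$ limit $\mu^*$ of $\mu_{\delta_n}(m_{\delta_n})$ is attached to a specific minimizing sequence and has no reason to be translation-invariant even if the energy is. Second, "localizing the entropy lower bound shows that the exchange and anisotropy contributions vanish" away from the line is vague; a lower bound does not, by itself, force vanishing. The correct mechanism (used in the paper) is a two-sided squeeze: transfer the local lower bound from $M_\delta$ to $m_\delta$ using \eqref{amel1} and \eqref{amel2} (so that $\mu^*$ restricted to the line already carries at least $(1-\ma)^2$ per unit length), and then observe that the global upper bound \eqref{hypo} caps the total mass at $(1-\ma)^2$, leaving nothing for the complement and forcing $\mu^*=\mu_0$. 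You should replace the translation-invariance reasoning by this local-plus-global mass argument, and you should invoke \cite{IO1} directly rather than attempting to redo the $\Ss^1$ analysis.
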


\medskip

The energy bound \eqref{hypo} is relevant for N\'eel walls (see e.g. \cite{I}). The similar result in the case of $\Ss^1$-valued magnetizations 
was previously proved by Ignat and Otto in \cite{IO1}. Theorem~\ref{thm:optimality} represents the extension of that result to the case of $\Ss^2$-valued magnetizations. 
An immediate consequence of Theorem \ref{thm:optimality} is the following lower bound of the energy $E_\delta$ within the boundary conditions \eqref{1b}.

\begin{cor}
Let $\delta>0$ and $\eps(\delta)>0$ satisfy the regime \eqref{reg_eta_eps}. Let $m_\delta\in H^1_\loc(\Omega,\mathbb{S}^2)$ satisfy \eqref{1b}. Then
\be
\label{low_est}
\liminf_{\delta \to 0} \delta |\log \delta| E_{\delta}(m_{\delta} )\geq \frac{\pi}{2} (1-\ma)^2.
\ee
\end{cor}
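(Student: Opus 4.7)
\emph{Plan.} I plan to deduce the corollary from Theorem~\ref{thm:optimality} by a direct contradiction argument. Assume \eqref{low_est} fails, so that there exist $L<\frac{\pi}{2}(1-\ma)^2$ and a subsequence, still denoted $\delta_n\to 0$, along which
\[
\delta_n|\log\delta_n|\,E_{\delta_n}(m_{\delta_n}) \longrightarrow L.
\]
Then $\limsup_{n\to\infty}\delta_n|\log\delta_n|\,E_{\delta_n}(m_{\delta_n})=L\leq \frac{\pi}{2}(1-\ma)^2$, so the hypothesis \eqref{hypo} of Theorem~\ref{thm:optimality} is met along this subsequence. Applying that theorem (and passing to a further subsequence if needed) produces some $x_1^*\in[-1,1]$ and the weak-$*$ concentration
\[
\mu_{\delta_n}(m_{\delta_n}) \rightharpoonup (1-\ma)^2\,\mathcal{H}^1\llcorner \{x_1^*\}\times\Tt\times\{0\} \quad\text{in } \mathcal{M}(\Om\times\R).
\]

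The remaining step is a total-mass comparison. The defining identity \eqref{energydef}, evaluated at $\zeta\equiv 1$, gives
\[
\mu_{\delta_n}(m_{\delta_n})(\Om\times\R) \;=\; \tfrac{2}{\pi}\,\delta_n|\log\delta_n|\,E_{\delta_n}(m_{\delta_n}) \;\longrightarrow\; \tfrac{2L}{\pi},
\]
while the limit measure is supported on the compact set $K=\{x_1^*\}\times\Tt\times\{0\}$ and has total mass $(1-\ma)^2$. Fixing any bounded open neighborhood $U\subset \Om\times\R$ of $K$ and applying the classical lower semicontinuity of mass on open sets under weak-$*$ convergence of nonnegative Radon measures (e.g.\ by testing against $\zeta\in C_c(U)$ with $0\leq \zeta\leq 1$ and $\zeta\equiv 1$ on a smaller neighborhood of $K$), I obtain
\[
(1-\ma)^2 \;=\; \mu(U) \;\leq\; \liminf_{n\to\infty}\mu_{\delta_n}(m_{\delta_n})(U) \;\leq\; \liminf_{n\to\infty}\tfrac{2}{\pi}\,\delta_n|\log\delta_n|\,E_{\delta_n}(m_{\delta_n}) \;=\; \tfrac{2L}{\pi}.
\]
This forces $L\geq \frac{\pi}{2}(1-\ma)^2$, contradicting the choice of $L<\frac{\pi}{2}(1-\ma)^2$.

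All the analytic content has already been packed into Theorem~\ref{thm:optimality}, so no serious obstacle is expected here: the corollary is really just the observation that energy concentration on a set of mass $(1-\ma)^2$ forces the total energy to carry at least that much mass in the limit. The only mild technicality is that $\zeta\equiv 1$ is not admissible as a compactly supported test function, which is circumvented by localizing on the bounded neighborhood $U$ of the (compact) support of the limit measure as above.
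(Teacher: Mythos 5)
Your proposal is correct and takes essentially the same route as the paper: the paper's proof of Theorem~\ref{thm:optimality} already contains the chain of inequalities $\mu_0(\mathrm{Line}) \leq \mu(\Om\times\R) \leq \liminf_{\delta\to 0}\int_{\Om\times\R} d\mu_\delta(m_\delta)$, from which it declares that \eqref{low_est} ``is straightforward''; your contradiction argument (extract a subsequence realizing the liminf so that \eqref{hypo} holds, invoke Theorem~\ref{thm:optimality}, and compare masses via lower semicontinuity against a cutoff $\zeta \in C_c$ equal to $1$ near the compact support of the limit measure) is exactly the deduction that phrase is pointing to. The only cosmetic remark is that in your final display the letter $\mu$ should read $\mu_0$, the limit measure produced by Theorem~\ref{thm:optimality}.
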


\bigskip

\subsection{Dynamics. The Landau-Lifshitz-Gilbert equation.} The dynamics in ferromagnetism is governed by a torque balance which gives rise
to a damped gyromagnetic precession of the magnetization around the 
effective field defined through the micromagnetic energy. 
The resulting system is the Landau-Lifshitz-Gilbert (LLG) equations which is neither a Hamiltonian system nor a gradient flow. 

\medskip

Let us present the setting of LLG equations. As the condition \eqref{1b}  is not preserved by the LLG flow, we will \emph{impose} the boundary conditions \eqref{1b} at each time $t\geq 0$, and look for solutions of LLG equations in the space domain $$x\in \omega:= (-1,1) \times \m T.$$ In order to define the micromagnetic energy and its gradient on $\omega$, we introduce the functional calculus derived from the Laplace operator on $\omega$ with Dirichlet boundary conditions. More precisely, for $f \in H^{-1}(\omega)$,  we define $ g := (-\Delta)^{-1} f$ as the solution of
\begin{gather} \label{def:Delta_dir}
\begin{cases}
 - \Delta g = f & \quad \text{in } \omega, \\
g (x_1,x_2) =0 & \quad \text{ on } \partial \omega, \textrm{ i.e., for } \, |x_1|=1, x_2\in \Tt.
\end{cases}
\end{gather}
Then $(-\Delta)^{-1}$ is a bounded operator $H^{-1}(\omega) \to H^1_0(\omega)$ and a compact self-adjoint operator $L^2(\omega) \to L^2(\omega)$. We can therefore construct a functional calculus based on it, and denote as usual $|\nabla|^{-2s}: = \big[ (-\Delta)^{-1} \big]^{s}$ for $s=1/2$ and $s=1/4$.

\medskip

The dynamics 
of the state of the thin ferromagnetic sample is described by the time-dependent magnetization
\[ m=m(t,x): [0,+\infty) \times \omega \to \mathbb{S}^2,\]
that solves the following equation (see \cite{Gilbert, Landau-Lifshitz}):
\begin{equation}
 \label{eq:LLG-simple} \tag{LLG$_0$}\partial_t m+\alpha m\times \dt m+ \beta  m\times \nabla \tilde{E}_{\delta}(m)=0\:  \text{ on } [0,\infty) \times \omega.
\end{equation}
Here, $\times$ denotes the cross product in $\R^3$, while
$\alpha>0$ is the Gilbert damping factor characterizing the dissipation form of \eqref{eq:LLG-simple} and $\beta>0$ is the gyromagnetic ratio characterizing the precession. The micromagnetic energy $\tilde{E}_\delta$ corresponding to the domain $\omega$ is defined via \eqref{def:Delta_dir}:
\be \label{definitia_bis} 
\tilde{E}_\delta(m)=
\int_{\omega}\bigg( |\nabla m|^2  + \frac{1}{\eps^2} m_3^2\, \bigg)dx+ \frac 1{2 \delta}  \int_{\omega} \big||\nabla|^{-1/2}\nabla\cdot m'\big|^2 \, dx, \ee 
so that the gradient of the energy $\tilde E_\delta(m)$ is given as:
\begin{equation} \label{nablaE_bis}
\nabla \tilde E_{\delta}(m)=- 2 \Delta m+\bigg(\frac{1}{\delta} \p(m'),\frac{2m_{3}}{\eps^2}\bigg),
\end{equation}
where we have introduced\footnote{Observe that our original nonlocal operator appearing in the energy gradient \eqref{nablaE} can be written as
$(-\Delta)^{1/2} \q H(m') ={ -} \nabla |\nabla|^{-1} \nabla \cdot m'$.} the operator $\p$ acting on $m' \in H^1(\omega,\m R^2)$ via \eqref{def:Delta_dir}:
\[ \p (m') := - \nabla |\nabla|^{-1} \nabla \cdot m'. \]
Observe that as in \eqref{eqh2}, we have
\[ \int_\omega \big| |\nabla|^{ -1/2} \p(m') \big|^2 \, dx =  \int_{\omega} \big||\nabla|^{-1/2}\nabla\cdot m'\big|^2 \, dx. \]

\medskip

\begin{rem} \label{rm:1}
\begin{enumerate}
\item[i)]
We highlight that Theorems \ref{thm_comp} and \ref{thm:optimality} remain valid in the context of the micromagnetic energy $\tilde{E}_\delta$ on $\omega$ within the boundary conditions \eqref{1b}, i.e.,
$m(x_1, x_2)= m_{\pm\infty}$ for $x_1= \pm 1$ and every $x_2\in \Tt$.

\item[ii)] Note that for a map $m:\omega\to \Ss^2$, one has $\tilde E_\delta(m)<\infty$ if and only if $m\in H^1(\omega)$.
\end{enumerate}
\end{rem}

\medskip

In this paper, we consider a more general form of the Landau-Lifshitz-Gilbert equation including additional drift terms, which has been derived in a related setting in \cite{ZL, Th} (see also \cite{KMM}): 
\begin{equation}
 \label{eq:LLG} \tag{LLG}
\partial_t m+\alpha m\times \dt m+ \beta  m\times \nabla \tilde E_{\delta}(m)+(v\cdot \nabla )m= m\times (v\cdot \nabla)m \, \, \text{ on } [0,+\infty) \times \omega,
\end{equation}
where $v:[0,+\infty)\times \omega\to \R^2$ represents the direction of an applied spin-polarized current\footnote{By definition $(v\cdot \nabla) m=v_1\partial_1m+v_2\partial_2m$.}.

\medskip


 {\nd \bf Regime.} 
We analyze the dynamics of the magnetization through \eqref{eq:LLG} in the asymptotics $\delta \to 0, \eps(\delta)\to 0$ in the regime \eqref{reg_eta_eps}, while
\begin{equation}
\label{reg:alpha}\alpha=\nu \eps,\quad \beta=\lambda \eps,
\end{equation}
where $\nu>0$ is kept fixed and
\begin{equation}
\label{reg:lambda}
\lambda(\delta)=o\left(\sqrt{\delta |\log \delta|}\right).
\end{equation}
 
The dynamics of the magnetization for the equation \eqref{eq:LLG-simple} has been derived by Capella, Melcher and Otto \cite{CMA} (see also Melcher \cite{Melcher}) in the asymptotics $\eps\to 0$ with {\it fixed} $\delta$ (see \cite[Theorem 1]{CMA}). The more general equation \eqref{eq:LLG} ({\it in the absence of the non-local energy term}) was studied by Kurzke, Melcher and Moser in \cite{KMM} where they derived rigorously the motion law of point vortices in a different regime, namely $\eps \to 0$ and $\delta=+\infty$.  We highlight that in those papers, the parameter $\delta >0$ is kept {\it fixed or large} yielding a uniform $H^1$ bound via the energy; it is far beyond the grasp of \eqref{reg_eta_eps}. Therefore, in the analysis developed below, we will have to deal with the loss of the uniform $H^1$ bound; our strategy relies on the fine qualitative behavior of the magnetization presented in Theorems \ref{thm_comp} and \ref{thm:optimality} (that remain valid in the context of the micromagnetic energy $\tilde{E}_\delta$ on $\omega$ within the boundary conditions \eqref{1b}).

\medskip

In the present paper we consider initial data with finite energy at $\delta>0$ fixed.\footnote{Recall that in the regime \eqref{regime:neel} the initial energy blows up in the limit $\delta\to 0$.} We first have to solve the corresponding Cauchy problem for \eqref{eq:LLG} imposing the boundary conditions \eqref{1b} at each time $t\geq 0$. Naturally, we understand that here the boundary condition \eqref{1b} reads as
$$m(t, x_1, x_2)= m_{\pm\infty} \quad  \text{for } \quad x_1= \pm 1, \, x_2\in \Tt, \, t\geq 0.$$
Moreover these solutions have finite energy for all time $t\geq 0$. We insist on the fact that  the energy can possibly increase in time, unlike for \eqref{eq:LLG-simple} which is dissipative.

\begin{df}
\label{def:weak}
We say that $m$ is a global weak solution to \eqref{eq:LLG} if
\begin{gather} \label{def:wsol_reg}
 m \in L_{\mr{loc}}^\infty([0,+\infty), H^1(\omega, \m S^2)) \cap \dot H^1_{\loc}([0,+\infty), L^2(\omega)),
 \end{gather}
and $m$ solves the equation \eqref{eq:LLG} in the distributional sense $\q D'((0,+\infty) \times \omega)$.
\end{df}

Observe that the regularity assumption \eqref{def:wsol_reg} of this definition allows to make all terms in the \eqref{eq:LLG} meaningful in the distributional sense: this gives its relevance to the definition.  

 Indeed,  \eqref{def:wsol_reg} first gives (due to Remark \ref{rm:1} ii)) that $\tilde E_\delta(m(t))$ is finite for all $t \ge 0$. Also, $\nabla \tilde E_\delta(m)\in L^\infty_\loc([0,+\infty), H^{-1}(\omega))$ since for $\nabla m(t)\in L^2(\omega)$ then $\Delta m(t)\in H^{-1}(\omega)$, while $\p(m'(t))\in L^2(\omega)$. From there, we infer that $m\times \nabla \tilde E_\delta(m)\in L^\infty_\loc([0,+\infty), H^{-1}(\omega))$. Indeed, by setting
\begin{equation*}\begin{split}\langle m(t)\times \Delta m(t), \phi\rangle_{H^{-1}(\omega),H_0^{1}(\omega)}:=-\sum_{j=1}^2\int_{\omega} (m(t)\times \partial_j m(t))\cdot \partial_j \phi \,dx\end{split}\end{equation*} and by noticing that $\p(m')$ and $m_{3}$ belong to $L^\infty_\loc([0,+\infty),L^2(\omega))$, we get for $0<\eps\leq \delta$ small (see \eqref{estm_ned}):
\begin{equation*}
\|m(t)\times \nabla \tilde E_\delta(m(t))\|_{H^{-1}(\omega)}\leq \frac{C}{\eps}  \tilde E_\delta(m(t))^{1/2}.
\end{equation*}
All the other terms in \eqref{eq:LLG} belong to $L^2_\loc([0,+\infty)\times {\omega})$.

We construct global weak solutions for \eqref{eq:LLG} in the following theorem.

\begin{thm}\label{thm:cauchy}Let $\delta\in (0, 1/2)$ be fixed, $m^0 \in H^1(\omega, \m S^2)$ be an initial data and the spin current $v\in L^\infty([0,+\infty)\times \omega, \R^2)$. 

Then there exists a global weak solution $m$ to \eqref{eq:LLG} (in the sense of Definition \ref{def:weak}), which satisfies the boundary conditions
\begin{align}
\label{eq:bdc2}
\quad m(t=0, \cdot) & = m^0 \,  \quad \, \, \text{ in } \omega, \\
\quad
m(t,x_1,x_2) & = m^0(x_1,x_2) \quad  \text{ if } x_1 = \pm 1 \text{ and for every } \  x_2 \in \m T, \,  t \ge 0. \label{eq:bdc1}
\end{align}
Furthermore $m$ satisfies the following energy bound: for all $t \ge 0$,
\begin{equation}
\label{energy_eq}
\tilde{E}_{\delta} (m(t)) + \frac{\alpha}{2\beta} \int_0^t \| \partial_t m(s) \|^2_{L^2(\omega)} ds \le  \tilde{E}_{\delta} (m^0) \exp \left(  \frac{4}{\alpha \beta} \int_0^t  \| v (s) \|_{L^\infty(\omega)}^2 ds \right).
\end{equation}
\end{thm}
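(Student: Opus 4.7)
The plan is to construct $m$ by a Faedo--Galerkin scheme combined with a Ginzburg--Landau penalization of the saturation constraint $|m|=1$, and to pass to the limit by means of the a priori estimate that produces \eqref{energy_eq} itself. First I would reduce to homogeneous Dirichlet boundary data by writing $m=m^0+u$ with $u(t,\cdot)\in H^1_0(\omega,\R^3)$, and for parameters $\eta>0$ and $N\in\NN$ introduce the penalized energy
\begin{equation*}
\tilde E_\delta^\eta(m) = \tilde E_\delta(m) + \frac{1}{4\eta}\int_\omega (|m|^2-1)^2\,dx.
\end{equation*}
I would then look for approximants $m_{N,\eta}=m^0+u_{N,\eta}$ with $u_{N,\eta}(t)\in V_N=\Span(e_1,\ldots,e_N)$, where $(e_k)$ is an $L^2$-orthonormal basis of Dirichlet Laplacian eigenvectors of $\omega$. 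Projecting the equation onto $V_N$ and replacing $\tilde E_\delta$ by $\tilde E_\delta^\eta$ yields a system of the form $(\Id+\alpha\,m_{N,\eta}\times\,\cdot\,)\partial_t u_{N,\eta}=\Phi(u_{N,\eta})$ in $V_N$; since $w\mapsto w+\alpha\,m_{N,\eta}\times w$ is a skew-symmetric perturbation of $\Id$, it is invertible, so this is an ODE in $V_N$ with locally Lipschitz right-hand side, uniquely solvable on a maximal interval by Cauchy--Lipschitz.

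The key step is the energy identity, derived by testing the projected equation with $m_{N,\eta}\times \partial_t m_{N,\eta}$. Using $(m\times \partial_t m)\cdot \partial_t m=0$, the identity $(m\times a)\cdot(m\times b)=|m|^2 a\cdot b-(m\cdot a)(m\cdot b)$, and the fact that $\nabla\tilde E_\delta^\eta(m)$ is the $L^2$-gradient of $\tilde E_\delta^\eta$, one formally obtains
\begin{equation*}
\alpha\,\|\partial_t m_{N,\eta}\|_{L^2}^2+\beta\,\frac{d}{dt}\tilde E_\delta^\eta(m_{N,\eta})\le 2\,\|v\|_{L^\infty}\,\|\nabla m_{N,\eta}\|_{L^2}\,\|\partial_t m_{N,\eta}\|_{L^2}.
\end{equation*}
Absorbing the last factor with Young's inequality and integrating via Gronwall produces \eqref{energy_eq} uniformly in $N$ and $\eta$, hence global existence of the approximants and the penalization bound $\int_0^T\!\int_\omega(|m_{N,\eta}|^2-1)^2\,dx\,dt\le C\eta$.

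It remains to pass to the limit first as $N\to\infty$, then as $\eta\to 0$. The uniform estimates show that $m_{N,\eta}$ is bounded in $L^\infty([0,T],H^1(\omega))\cap H^1([0,T],L^2(\omega))$; Aubin--Lions then provides strong convergence in $L^2([0,T]\times\omega)$ along a subsequence, together with weak-$\ast$ convergence of $\nabla m$ in $L^\infty(L^2)$ and weak convergence of $\partial_t m$ in $L^2$. This suffices to pass to the limit in the bilinear terms $m\times\partial_t m$, $m\times(v\cdot\nabla)m$, $(v\cdot\nabla)m$, and in the nonlocal contribution $\beta\,m\times\p(m')$, since $\p$ is bounded on $L^2(\omega)$. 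For the delicate term $m\times\Delta m$, I would use the identity $m\times\Delta m=\sum_j\partial_j(m\times\partial_j m)$ in $\q D'$, so that only the strong $L^2$ convergence of $m$ combined with the weak $L^2$ convergence of $\nabla m$ is needed. The penalization control shows $|m_\eta|^2\to 1$ in $L^2$, hence $|m|=1$ almost everywhere in the final limit, and \eqref{energy_eq} is recovered by lower semicontinuity.

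The main obstacle is twofold. First, one must justify the energy identity rigorously at the level of the Galerkin--penalized approximants: since the simplifications $|m\times \partial_t m|^2=|\partial_t m|^2$ and $m\cdot \partial_t m=0$ no longer hold pointwise, correction terms of the form $(|m_{N,\eta}|^2-1)$ appear, which must be shown to be controlled of order $O(\sqrt\eta)$ through the penalization estimate. Second, the identification of the weak limit of quadratic terms of the type $m\times \partial_t m$ requires the strong $L^2$ convergence of $m$ supplied by Aubin--Lions, together with distributional integration by parts. Once these two points are handled, the construction, the boundary conditions \eqref{eq:bdc2}--\eqref{eq:bdc1}, and the bound \eqref{energy_eq} all follow directly.
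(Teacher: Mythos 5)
Your route (Faedo--Galerkin projection combined with Ginzburg--Landau penalization of the constraint) is genuinely different from the paper's, which uses a finite-difference discretization in space. Unfortunately there is a structural gap that makes the proposed scheme break down at the crucial step, namely the derivation of the energy estimate.

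The energy estimate for LLG comes from testing the equation with $m\times\partial_t m$ and exploiting, pointwise, that $|m|=1$ and $m\cdot\partial_t m=0$; these turn $(m\times\partial_t m)\cdot(m\times a)$ into $\partial_t m\cdot a$. In your Galerkin scheme this breaks in two independent ways. First, the function $m_{N,\eta}\times\partial_t m_{N,\eta}$ is a quadratic expression in the unknown (and involves the inhomogeneous lift $m^0$), so it is not an element of $V_N$. When you pair the \emph{projected} equation with it, the projection $P_N$ lands on the test function, and the cancellations $\langle m\times\partial_t m,\partial_t m\rangle=0$, $\langle m\times\partial_t m, m\times\nabla \tilde E_\delta(m)\rangle=\langle\partial_t m,\nabla\tilde E_\delta(m)\rangle$ are all lost after projection. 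Testing instead with $\partial_t m_{N,\eta}\in V_N$ removes the $\alpha$-damping term entirely and does not produce $\tfrac{d}{dt}\tilde E_\delta(m)$ from the precession term. Second, even ignoring the projection issue, the pointwise identities you invoke require $|m_{N,\eta}|\equiv 1$ and $m_{N,\eta}\cdot\partial_t m_{N,\eta}\equiv 0$, neither of which the penalized Galerkin approximants satisfy. The error terms you flag are not merely of the form $(|m_{N,\eta}|^2-1)$: they also involve $m_{N,\eta}\cdot\partial_t m_{N,\eta}=\tfrac12\partial_t(|m_{N,\eta}|^2-1)$, which the penalization bound (an $L^2_{t,x}$ bound on $|m|^2-1$, not on its time derivative) does not control. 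Worse, the penalization bound $\int_0^T\!\!\int_\omega(|m_{N,\eta}|^2-1)^2\le C\eta$ is itself a \emph{consequence} of the uniform energy estimate, so invoking it to close the energy estimate is circular.

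The paper sidesteps all of this by discretizing in space with finite differences: the discrete system is a genuine pointwise ODE on the grid, with right-hand side of the form $m^h\times(\cdots)$, so $|m^h(t,x)|=1$ is preserved exactly for every grid point and every time (one checks $\tfrac{d}{dt}|m^h(t,x)|^2=0$ directly). One can then test the discrete equation pointwise with $m^h\times\tfrac{dm^h}{dt}$, use the algebraic identities on the nose, and sum over the grid using discrete integration by parts and Dirichlet boundary conditions on the grid. To salvage your approach you would need either a Galerkin scheme that preserves the sphere constraint (which ordinary $L^2$-orthogonal projection does not), or a nonlinear approximation scheme that both regularizes and retains $|m|=1$ (e.g.\ the kind of normalization-after-time-step or saddle-point schemes used in the numerical LLG literature). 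As written, the central estimate \eqref{energy_eq} is not established.
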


The proof of Theorem \ref{thm:cauchy} takes its roots in \cite{AlSo} via a space discretization. To the best of our knowledge however, there is no such result taking into account the non-local term in $\nabla \tilde{E}_\delta$ (see \eqref{nablaE_bis}). One needs to carry on the computations carefully, specially as it comes together with the constraint of $\m S^2$-valued maps. For the convenience of the reader we provide a full proof in Section 5 below.

\bigskip

We next specify our set of assumptions for the dynamics in the asymptotics $\delta, \eps(\delta) \to 0$:
\begin{enumerate}
\item[(A1)]  The initial data 
$m_{\delta}^0\in H^1(\omega, \Ss^2)$ satisfy \eqref{1b} and $\sup_{\delta\to 0} \delta |\log \delta| \tilde E_\delta(m_\delta^0)<+\infty$. 

\item[(A2)] The regime \eqref{reg_eta_eps} holds as $\delta\to 0$ and the parameters $\alpha$ and $\beta$ satisfy \eqref{reg:alpha} and \eqref{reg:lambda}.

\item[(A3)] The spin-polarized current satisfies \be
\label{reg:gamma} \|\vd\|_{L^\infty([0,+\infty)\times \omega)}^2\leq  \alpha \beta.\ee
In particular, we have $v_{\delta} \to 0$  in $L^{\infty}([0,+\infty) \times \omega)$.
\end{enumerate}
\medskip

Due to the energy estimate \eqref{energy_eq}, the energetic regime in (A1) holds for all times $t \ge 0$ (with no uniformity in $t$ though). In particular, Theorem \ref{thm_comp} implies that for all $t>0$, the magnetizations $\{\md(t)\}_\delta$ admit a subsequence converging in $L^2(\omega)$ to a limiting magnetization $(m'(t),0)$ as $\delta\to 0$. Our main result is that the subsequence does not depend on $t$ and that the limiting configuration is stationary.

\begin{thm} \label{thm:dyn} Let $\{m_{\delta}^0\}_{0<\delta<1/2}$ be a family of initial data in $H^1(\omega, \Ss^2)$. Suppose that the assumptions (A1), (A2) and (A3) above are satisfied. Let $\{m_\delta\}_{0<\delta<1/2}$ denote any family of global weak solutions to \eqref{eq:LLG} satisfying \eqref{eq:bdc2}, \eqref{eq:bdc1} and the energy estimate \eqref{energy_eq}. 

Then there exists a subsequence $\delta_n\to 0$ such that $m_{\delta_n}(t)\to m(t)$ in $L^2(\omega)$ for all $t\in [0,+\infty)$ as $n\to \infty$ where the accumulation point $m=(m',0)\in C([0,+\infty),L^2(\omega, \Ss^2))$ satisfies
\[ |m'(t)|=1,\quad \nabla\cdot m'(t)=0\quad \text{in }\mathcal{D}'( \omega),\quad \forall t\in [0,+\infty). \]
Moreover, the limit $m$ is stationary, i.e., 
\[ \partial_t m'=0\quad \text{in } \:\mathcal{D}'([0,+\infty)\times \omega). \]

\end{thm}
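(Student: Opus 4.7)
The plan is to combine the pointwise-in-time compactness provided by Theorem~\ref{thm_comp} (applicable on $\omega$ by Remark~\ref{rm:1}(i)) with a quantitative time-equicontinuity of $m_\delta$ in a weak topology, in order to extract a single subsequence valid for all $t\geq 0$ and to identify the limit as stationary.

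First, I would use (A3) to tame the energy growth in \eqref{energy_eq}: since $\|v_\delta\|_\infty^2 \leq \alpha\beta$, the exponential factor is dominated by $e^{4t}$, so by (A1) for every $t\geq 0$,
\[
\tilde E_\delta(m_\delta(t)) \leq \tilde E_\delta(m_\delta^0)\, e^{4t} \leq \frac{Ce^{4t}}{\delta|\log\delta|}.
\]
In particular, at each fixed $t$ the family $\{m_\delta(t)\}_\delta$ satisfies the N\'eel-wall regime \eqref{regime:neel}, and Theorem~\ref{thm_comp} yields relative $L^2(\omega)$-compactness, any accumulation point having the form $(m'(t),0)$ with $|m'(t)|=1$ and $\nabla\cdot m'(t)=0$.

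Next, I would obtain an $H^{-1}(\omega)$ bound on $\partial_t m_\delta$ that tends to $0$ as $\delta\to 0$. Since $|m_\delta|=1$ forces $m_\delta\cdot \partial_t m_\delta = 0$, the operator $X\mapsto X + \alpha\, m_\delta\times X$ in \eqref{eq:LLG} can be inverted (by taking a cross product with $m_\delta$ and solving the resulting linear system on the tangent plane), yielding
\[
(1+\alpha^2)\,\partial_t m_\delta = -\beta\, m_\delta\times\nabla\tilde E_\delta(m_\delta) + \alpha\beta\bigl[(m_\delta\cdot \nabla\tilde E_\delta(m_\delta))\,m_\delta - \nabla\tilde E_\delta(m_\delta)\bigr] + R(m_\delta,v_\delta),
\]
where $R$ gathers the drift and current-precession terms. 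Using the estimate $\|m_\delta\times\nabla\tilde E_\delta(m_\delta)\|_{H^{-1}(\omega)}\leq C\,\tilde E_\delta(m_\delta)^{1/2}/\eps$ recalled in the introduction, together with $\beta=\lambda\eps$, the first term is bounded in $H^{-1}$ by $C\lambda\,\sqrt{\tilde E_\delta(m_\delta)} \leq C\lambda\, e^{2t}/\sqrt{\delta|\log\delta|}\to 0$ uniformly on $[0,T]$ by \eqref{reg:lambda}. The $\alpha\beta$-correction is even smaller by a factor $\alpha=\nu\eps$. The drift contributions are controlled in $L^2(\omega)$ by $\|v_\delta\|_\infty\|\nabla m_\delta\|_{L^2}\leq \sqrt{\alpha\beta\,\tilde E_\delta(m_\delta)}$, which tends to $0$ thanks to the exponential smallness of $\eps$ prescribed by \eqref{reg_eta_eps}. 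Altogether, $\sup_{s,t\in[0,T]}\|m_\delta(t)-m_\delta(s)\|_{H^{-1}(\omega)}\to 0$ as $\delta\to 0$.

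I would then combine this quantitative equicontinuity with the pointwise compactness. Enumerate a dense sequence $(t_k)\subset [0,+\infty)$; by Theorem~\ref{thm_comp} and a Cantor diagonal extraction there is a subsequence $\delta_n\to 0$ such that $m_{\delta_n}(t_k)\to m_k$ in $L^2(\omega)$ for each $k$. The previous uniform bound forces $m_j=m_k$ in $H^{-1}(\omega)$, hence in $L^2$, so all these limits coincide with a single $m_*=(m'_*,0)\in L^\infty(\omega,\m S^2)$ satisfying $|m'_*|=1$ and $\nabla\cdot m'_*=0$. For arbitrary $t\geq 0$, any $L^2$-accumulation point $\tilde m(t)$ of $m_{\delta_n}(t)$ (which exists by Theorem~\ref{thm_comp}) satisfies, for any fixed $t_k$,
\[
\|m_{\delta_n}(t)-m_*\|_{H^{-1}} \leq \|m_{\delta_n}(t)-m_{\delta_n}(t_k)\|_{H^{-1}} + C\|m_{\delta_n}(t_k)-m_*\|_{L^2}\xrightarrow[n\to\infty]{} 0,
\]
so $\tilde m(t)=m_*$ in $H^{-1}$, hence in $L^2$. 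Thus $m_{\delta_n}(t)\to m_*$ in $L^2(\omega)$ for every $t\geq 0$, and because $m(t)=m_*$ is independent of $t$, the stationarity $\partial_t m' = 0$ in $\q D'([0,+\infty)\times\omega)$ and the $C([0,+\infty),L^2(\omega))$ regularity follow for free. The main obstacle throughout is the second step: the gradient $\nabla\tilde E_\delta(m_\delta)$ contains the stiff contributions $\delta^{-1}\p(m'_\delta)$ and $\eps^{-2}m_{3,\delta}$ that blow up individually as $\delta,\eps\to 0$, and it is precisely the scaling $\beta=\lambda\eps$ with $\lambda=o(\sqrt{\delta|\log\delta|})$, combined with the smallness of $v_\delta$ in (A3), that renders $\beta\nabla\tilde E_\delta(m_\delta)$ and the drift terms negligible in $H^{-1}(\omega)$ at the N\'eel-wall energy level \eqref{regime:neel}.
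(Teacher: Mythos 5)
Your overall strategy matches the paper's: get pointwise-in-time $L^2(\omega)$-compactness from Theorem~\ref{thm_comp} and the energy control \eqref{energy_eq}, establish a uniform $H^{-1}(\omega)$ time-equicontinuity, combine via Aubin--Lions (or a Cantor diagonal in your phrasing), and then upgrade the $H^{-1}$ convergence to $L^2$ using pointwise compactness. The pieces concerning the energy growth bound, the drift terms, and the final diagonal/identification step are all sound.

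However, there is a genuine gap in the middle step where you invert the operator $X\mapsto X+\alpha m_\delta\times X$. That inversion produces the term
$\alpha\beta\bigl[(m_\delta\cdot\nabla\tilde E_\delta(m_\delta))\,m_\delta-\nabla\tilde E_\delta(m_\delta)\bigr]=-\alpha\beta\, m_\delta\times\bigl(m_\delta\times\nabla\tilde E_\delta(m_\delta)\bigr),$
and you assert it is ``even smaller by a factor $\alpha$'' without justification. The issue is that multiplication by $m_\delta\in L^\infty\cap H^1$ is not a bounded operation on $H^{-1}(\omega)$, so you cannot simply transfer the bound on $\|m_\delta\times\nabla\tilde E_\delta(m_\delta)\|_{H^{-1}}$. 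Concretely, the exchange part of $\nabla\tilde E_\delta$ contributes $(m_\delta\cdot\Delta m_\delta)m_\delta=-|\nabla m_\delta|^2\,m_\delta$ (using $|m_\delta|=1$), which is a priori only in $L^1(\omega)$; and $L^1(\omega)\not\hookrightarrow H^{-1}(\omega)$ in dimension $2$. Getting an $H^{-1}$ bound for $\Delta m_\delta+|\nabla m_\delta|^2 m_\delta$ in terms of $\|\nabla m_\delta\|_{L^2}^2$ requires a compensated-compactness/Hardy--BMO argument (the div-curl structure of $\mathbb{S}^2$-valued maps), which you neither invoke nor prove. The paper's Proposition~\ref{prop:equicont} sidesteps this entirely: it does not invert the operator but keeps $\alpha m_\delta\times\partial_t m_\delta$ as is, bounds $\partial_t m_\delta$ directly in $L^2([0,T],L^2(\omega))$ from the energy dissipation \eqref{ineq:energy-time}, and uses $|m_\delta|=1$ to conclude $\|\alpha m_\delta\times\partial_t m_\delta\|_{L^2}\le\alpha\|\partial_t m_\delta\|_{L^2}\le C\eps\sqrt{\lambda}/\sqrt{\delta|\log\delta|}$. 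You should adopt this more elementary route (or supply the missing Hardy-space lemma) to close the gap.
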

In particular, it follows immediately from Theorems \ref{thm:optimality} and \ref{thm:dyn} (and Remark \ref{rm:1} i)) that for well-prepared initial data the asymptotic magnetization is a static straight wall for all $t\geq 0$:
\begin{cor}Under the same assumptions as in Theorem \ref{thm:dyn}, 
assume moreover that the initial data are well-prepared:
\begin{equation*}
\limsup_{\delta \to 0}  \delta|\log \delta|\tilde{E}_{\delta}(m_{\delta}^0) \le \frac{\pi}{2}(1-m_{1,\infty})^2.
\end{equation*}
Let $\delta_n\to 0$ and let $x_1^\ast\in [-1,1]$ be such that $m_{\delta_n}^0\to m^*$ in $L^2(\om)$, where $ m^*$ is a straight wall defined by \eqref{1e}. Then we have $m_{\delta_n}(t)\to m^*$ in $L^2(\om)$ for all $t\geq 0$.

\end{cor}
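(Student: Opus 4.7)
The plan is to combine Theorem~\ref{thm:dyn} with the identification of the limit at the initial time. The well-preparedness condition, together with Theorem~\ref{thm:optimality}, ensures that the $L^2$-limit of $\{m_{\delta_n}^0\}$ is a straight wall $m^*$; meanwhile, Theorem~\ref{thm:dyn} tells us that any $L^2$-limit of the dynamical problem is stationary in time. So I expect the limit to remain equal to $m^*$ at every $t \geq 0$, and a subsequence-of-subsequence argument will upgrade convergence from an abstractly extracted subsequence to the full sequence $\{\delta_n\}$.

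First, I would fix an arbitrary subsequence of $\{\delta_n\}$ and invoke Theorem~\ref{thm:dyn} to extract from it a further subsequence, still denoted $\delta_{n_k}$ for brevity, along which $m_{\delta_{n_k}}(t) \to \tilde m(t)$ in $L^2(\omega)$ for every $t \geq 0$, where $\tilde m = (\tilde m', 0) \in C([0,+\infty), L^2(\omega, \Ss^2))$ satisfies $|\tilde m'| = 1$, $\nabla \cdot \tilde m' = 0$ in $\mathcal{D}'(\omega)$, and, crucially, $\partial_t \tilde m' = 0$ in $\mathcal{D}'([0,+\infty) \times \omega)$. At $t=0$, uniqueness of strong $L^2$-limits combined with the assumed convergence $m_{\delta_n}^0 \to m^*$ then forces $\tilde m(0) = m^*$.

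Next, I would upgrade the distributional stationarity to pointwise-in-time constancy. Since $\tilde m' \in C([0,+\infty), L^2(\omega))$ and $\partial_t \tilde m' = 0$ distributionally, testing against separable functions $\phi(x)\chi(t)$ shows that $t \mapsto \int_\omega \tilde m'(t,x) \phi(x)\,dx$ is constant on $[0,+\infty)$ for every $\phi \in C_c^\infty(\omega)$. By density this implies $\tilde m'(t) = (m^*)'$ in $L^2(\omega)$ for every $t\geq 0$, and combined with $\tilde m_3 \equiv 0 = m^*_3$ I get $\tilde m(t) = m^*$ for all $t \geq 0$.

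Finally, since every subsequence of $\{m_{\delta_n}\}$ admits, via the two preceding steps, a further subsequence converging in $L^2(\omega)$ to the same limit $m^*$ at each fixed $t \geq 0$, and since the limit does not depend on the extraction, the full sequence $m_{\delta_n}(t)$ converges to $m^*$ in $L^2(\omega)$ for every $t \geq 0$. I do not foresee a genuine obstacle here: the dynamical compactness and stationarity are already packaged in Theorem~\ref{thm:dyn}, and the remaining passage from distributional stationarity together with strong time continuity to pointwise constancy of $\tilde m'$ is the only analytic step, and it is standard.
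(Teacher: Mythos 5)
Your proof is correct and takes essentially the same approach as the paper, which leaves the corollary as "immediate" from Theorems~\ref{thm:optimality} and \ref{thm:dyn}. You have correctly filled in the three details the paper glosses over: identifying the limit at $t=0$ from the hypothesis $m_{\delta_n}^0 \to m^*$, upgrading the distributional stationarity $\partial_t \tilde m'=0$ to the pointwise-in-time identity $\tilde m(t)=m^*$ using the strong time continuity furnished by Theorem~\ref{thm:dyn}, and the subsequence-of-subsequence argument that promotes convergence from an extracted subsequence to the full sequence $\{\delta_n\}$.
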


\medskip

The paper is organized as follows. In Sections 2 and 3, we focus on the stationary results and prove Theorem \ref{thm_comp} and Theorem \ref{thm:optimality}. In Section 4, we prove Theorem \ref{thm:dyn}, assuming Theorem \ref{thm:cauchy}, which is proved in Section 5.
Finally, we prove in the Appendix a uniform estimate in the context of the Ginzburg-Landau energy, which is needed in the proof of Theorem \ref{thm_comp}.

\bigskip

In all the following $C$ will denote an absolute constant (independent of the parameters of the system) which can possibly change from one line to another.

\section{Approximation and compactness}

This section is devoted to the proof of Theorem \ref{thm_comp}. 
A similar compactness result to Theorem \ref{thm_comp} has been already established by Ignat and Otto in \cite[Theorem 4]{IO1} for $\Ss^1$-valued magnetizations. In order to establish compactness for $\Ss^2$-valued magnetizations we will use an argument consisting in approximating $\Ss^2$-valued maps by $\Ss^1$-valued maps with quantitative bounds given in terms of the energy, which is stated as follows.

\begin{thm}
\label{lem_approx}
Let $\beta\in (0,1)$. Let $\delta>0$ and $\eps(\delta)>0$ satisfy the regime \eqref{reg_eta_eps}, i.e.,
$$\frac{1}{\delta |\log \delta| |\log \eps|}\to 0 \quad  \textrm{as }\, \delta \to 0,$$ and let $m_\delta
=(m_\delta', m_{3, \delta})\in H^1_{\loc}(\Om, \Ss^2)$ satisfy \eqref{1b} and \eqref{regime:neel}. Then there exists $M_\delta\in H^1_{\loc}(\Om, \Ss^1)$ that satisfies \eqref{1b} such that
 \be
\label{cond-approx}  \int_{\Om} |M_\delta-m_\delta'|^2\, dx\leq C
\eps^{2\beta} E_\delta(m_\delta)\quad \textrm{and}\quad \int_{\Om} |\nabla (M_\delta-m_\delta')|^2\, dx\leq C E_\delta(m_\delta)\ee
and
\be
\label{amel2}
\int_{\Om\times \R} |h(M_\delta)-h(m'_\delta)|^2\, dxdz\leq C \eps^\beta E_\delta(m_\delta),
\ee
and
\be
\label{main_ineg}
 E_\delta(M_\delta)\leq  E_\delta(m_\delta) \left(1+o(1)\right),\ee
 where $o(1)=O\bigg(\big(\frac{1}{\delta |\log \delta| |\log \eps|}\big)^{\frac16-}\bigg)$ and $\frac 1 6-$ is any fixed positive number less than $\frac 1 6$.
Moreover, for every full square $T(x,r)$ centered at $x$ of side of length $2r$ with $\eps^{\beta}/ r\to 0$ as $\delta \to 0$, we have\footnote{In \eqref{amel1}, $o(1)$ is the same as in \eqref{main_ineg}.}
\be
\label{amel1}
\int_{T(x,r-2\eps^{\beta})} |\nabla M_\delta|^2\, dx\leq (1+o(1))\int_{T(x,r)}
\bigg(|\nabla m'_\delta|^2+\frac{1}{\eps^2}m_{3,\delta}^2\bigg)\, dx.
\ee

  \end{thm}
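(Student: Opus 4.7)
The overall strategy is to exploit the constraint $|m_\delta| = 1$ on $\Ss^2$ to replace the $\Ss^2$-valued $m_\delta$ by an $\Ss^1$-valued $M_\delta$ that coincides, away from a small exceptional set, with the radial projection $m'_\delta/|m'_\delta|$. Two pointwise identities drive the construction: $|\nabla m_\delta|^2 \ge |\nabla m'_\delta|^2$, and $1-|m'_\delta|^2 = m_{3,\delta}^2$, so that the penalization $\int m_{3,\delta}^2/\eps^2$ in $E_\delta$ automatically controls the Ginzburg--Landau potential $\int (1-|m'_\delta|^2)^2/\eps^2$. Hence the in-plane part satisfies the Ginzburg--Landau bound $\int_\Omega(|\nabla m'_\delta|^2 + \eps^{-2}(1-|m'_\delta|^2)^2)\,dx \le E_\delta(m_\delta)$, which by \eqref{reg_eta_eps} and \eqref{regime:neel} is $o(|\log \eps|)$. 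This places $m'_\delta$ in the sub-critical Ginzburg--Landau regime where vortices of nonzero degree cannot persist at scale $\eps$, and suggests that a global radial projection should succeed up to a negligible exceptional set.

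Concretely, I would fix a covering scale $r = \eps^\beta$ and apply a good-radius/vortex-ball construction of Ginzburg--Landau type (invoking the uniform estimate stated in the Appendix): this produces a finite union $K_\delta$ of squares of side $r$ on whose complement $|m'_\delta| \ge 1-\eta$ with $\eta = o(1)$, and such that on the boundary of each connected component of $K_\delta$ the degree of $m'_\delta/|m'_\delta|$ vanishes. The latter is the key topological input: the total degree is bounded by $E_\eps^{GL}/(\pi|\log\eps|) = o(1)$ and must therefore be zero. On $\Omega \setminus K_\delta$ I set $M_\delta := m'_\delta/|m'_\delta|$; on each bad component $T_i$, the degree-zero condition allows a continuous phase lift $\theta_i$ of $M_\delta|_{\partial T_i}$, which I extend harmonically inside $T_i$ and set $M_\delta := (\cos\theta_i, \sin\theta_i)$. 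The boundary condition \eqref{1b} is preserved since $M_\delta = m'_\delta$ wherever $|m'_\delta| = 1$.

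The four estimates then fall into place. For the $L^2$ bound in \eqref{cond-approx}, on $\Omega \setminus K_\delta$ one has $|M_\delta - m'_\delta|^2 \le (1-|m'_\delta|)^2 \le (1-|m'_\delta|^2)^2 = m_{3,\delta}^4 \le m_{3,\delta}^2$, so the contribution is at most $\eps^2 E_\delta$; on $K_\delta$ it follows from $|M_\delta|, |m'_\delta| \le 1$ combined with an area estimate $|K_\delta| \lesssim r^2 N_\delta$, with $N_\delta$ the number of bad squares controlled by the Appendix. The $H^1$ bound uses the pointwise identity $|\nabla(m'/|m'|)|^2 \le |\nabla m'|^2/|m'|^2$ on the good set and a harmonic-extension estimate inside each $T_i$ in terms of boundary data. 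The stray-field bound \eqref{amel2} follows from the interpolation $\int|h(v)|^2\,dx\,dz \lesssim \|v\|_{L^2}\|\nabla v\|_{L^2}$ (since $|\nabla|^{-1/2}\nabla\cdot v$ is controlled by $\|v\|_{\dot H^{1/2}}^2$) applied to $v = M_\delta - m'_\delta$, combined with the preceding two estimates. The main comparison \eqref{main_ineg} then combines the good-region bound $|\nabla M_\delta|^2 \le (1+O(\eta))|\nabla m'_\delta|^2$ with the stray-field perturbation, while the localization \eqref{amel1} holds because $M_\delta$ is only modified inside squares of side $r = \eps^\beta$, and the buffer $2\eps^\beta$ between $T(x, r-2\eps^\beta)$ and $T(x,r)$ is large enough that every bad square meeting the smaller cube lies entirely inside the larger one.

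The main obstacle will be to optimize the parameters $\beta$ and $\eta$ and the covering scale to achieve the claimed rate $o(1) = O((\delta|\log\delta||\log\eps|)^{-1/6+})$ in \eqref{main_ineg} and \eqref{amel1}. Three competing contributions must be balanced: the pointwise multiplicative error $O(\eta)$ on the good set, the Dirichlet-extension cost on the bad squares, and the stray-field perturbation produced by the modification. Each scales differently as a power of $r$, $\eta$, and $E_\delta$, and the exponent $1/6$ should emerge from the optimal choice, with the uniform Appendix estimate pinning down the number and measure of bad squares precisely enough to close the argument.
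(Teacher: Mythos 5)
Your overall strategy (exploit $1-|m'|^2 = m_3^2$ to place $m'$ in the sub-critical Ginzburg--Landau regime, control degree, project to $\Ss^1$) is the right terrain, and the covering-by-squares-of-side-$\eps^\beta$ and the interpolation estimate for $h(M-m')$ match the paper's Steps 1 and 6. However, there is a genuine gap in the core construction: you set $M_\delta := m'_\delta/|m'_\delta|$ on a good set where $|m'_\delta|\ge 1-\eta$ and you invoke the Appendix to produce that good set. But the Appendix theorem is a statement about \emph{minimizers} of the Ginzburg--Landau energy with prescribed boundary data (it is a Bethuel--Brezis--H\'elein-type $L^\infty$ estimate relying on Pohozaev and elliptic regularity for the Euler--Lagrange equation). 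The map $m'_\delta$ is not such a minimizer, and indeed nothing in the energy bound forces $|m'_\delta|$ to stay uniformly close to $1$: the $\eps^{-2}\int(1-|m'|^2)^2$ term only controls the $L^4$-deviation, not an $L^\infty$ one. So the good/bad-set decomposition of $m'_\delta$ you postulate is not available from the cited tool, and the radial projection of $m'_\delta$ is not controlled.

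The paper's proof inserts exactly the missing intermediate object: on each cell $\mathcal C$ of the grid it defines $u$ as the \emph{minimizer} of the Ginzburg--Landau energy $\int_{\mathcal C}\den(\cdot)$ with boundary data $m'|_{\partial\mathcal C}$, and only then puts $M := u/|u|$. This serves two purposes that your sketch misses. First, the Appendix theorem now applies to $u$ and gives $\sup_\Omega \big| |u|^2-1\big| \le \eta = O\big((\delta|\log\delta||\log\eps|)^{-1/6+}\big)$ uniformly, so the projection $u/|u|$ is everywhere defined with the announced rate and there is \emph{no} bad set at all. Second, and crucially for \eqref{main_ineg} and \eqref{amel1}, minimality gives $\int_{\mathcal C}\den(u) \le \int_{\mathcal C}\den(m')$, hence
\[
(1-\eta)\int_\Omega |\nabla M|^2 \le \int_\Omega |\nabla u|^2 \le \int_\Omega \den(u) \le \int_\Omega \den(m') \le E_\delta(m),
\]
which is precisely what yields the sharp $(1+o(1))$ factor. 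Your radial-projection-plus-harmonic-extension plan has no analogue of this chain: the factor $1/|m'|^2$ on the good set and the Dirichlet energy of the harmonic extension on the bad set each add an error that does not obviously close to $o(1)$ at the stated rate, and it is exactly to achieve \eqref{main_ineg} (the improvement over Ignat--Otto \cite{IO2}, which the paper emphasizes) that the cell-wise replacement by a GL minimizer is introduced. In addition, the $L^2$ bound in \eqref{cond-approx} with the $\eps^{2\beta}$ prefactor is obtained in the paper by Poincar\'e on each cell relative to boundary averages, using that $M$ and $m'/|m'|$ agree on the $1$-dimensional grid where $|m'|\ge 1/2$; your good-set pointwise bound $|M-m'|\le 1-|m'|$ combined with a bad-set area estimate cannot produce this without first controlling the measure of the bad set, which again you do not have from the Appendix.
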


Theorem \ref{lem_approx} is reminiscent of the argument developed by Ignat and Otto \cite{IO2} with a major improvement given by \eqref{main_ineg}, i.e., the approximating $\Ss^1$-map $M_\delta$ has lower energy than the $\Ss^2$-map $m_\delta$ (up to $o(1)$ error).

\begin{proof}
To simplify notation, we will often omit the index $\delta$ in the following. We introduce a Ginzburg-Landau type energy density: 
\be \label{GL-en} \den(m')=|\nabla m'|^2+\frac{1}{\eps^2} (1-|m'|^2)^2. \ee
The approximation scheme is inspired by \cite{IO2}.

\medskip

\emph{Step 1. Construction of a squared grid.} For each shift
$t\in (0, \eps^{\beta})$, we consider the set
\[ H_t=\{x=(x_1, x_2)\in \RR\times (0,1)\, : \, x_2\in (\eps^\beta,1-\eps^\beta), \, x_2\equiv t \,\, (\text{mod }\eps^{\beta})\} \]
 and we repeat it $1$-periodically in $x_2$ to obtain a net of horizontal lines at a distance
$\eps^{\beta}$ in $\Omega$. By the mean value theorem, 
there exists $t\in (0,\eps^\beta)$
such that
\[ \int_{H_t} \den(m')\, d\h^1\leq \frac{1}{\eps^\beta} \int_{\Om} \den(m')\, dx.\]
If one repeats the above argument for the net of vertical
lines at distance $\eps^{\beta}$ in $\Om$, we get a shift $s\in (0, \eps^\beta)$ such that
the net  
\[ V_s:=\{x\in \Om\, :\, x_1\in (-1+\eps^\beta, 1-\eps^\beta), \, x_1\equiv s \,\, (\text{mod }
\eps^{\beta})\} \]
satisfies
\[ \int_{V_s} \den(m')\, d\h^1\leq \frac{1}{\eps^\beta} \int_{\Om} \den(m')\, dx. \] 
Set $\tilde V_s:=V_s\cup \{(x_1, x_2)\, :\, x_1\in\{\pm 1\}, x_2\in [0,1)\}$
and remark that $\int_{V_s} \den(m')\, d\h^1=\int_{\tilde V_s} \den(m')\, d\h^1$ since $m$ satisfies \eqref{1b}.
Therefore, we obtain an $x_2$-periodic squared grid
${\mathcal R}=H_t \cup \tilde V_s$ of size more than $\eps^\beta$ such that \be
\label{condR}
 \int_{{\mathcal R}} \den(m')\,
d\h^1 \leq \frac{2}{\eps^\beta} \int_{\Om} \den(m')\,
dx\leq \frac{2 E_\delta(m)}{\eps^\beta}\leq  \frac{C}{\eps^\beta \delta|\log \delta|}.\ee
Due to periodicity, one may assume that $\mathcal R$ includes the horizontal line $\R\times\{0\}$.

\medskip

\emph{Step 2. Vanishing degree on the cells of the grid $\mathcal R$.}
In order to approximate $m'$ in $\Om$ by $\m \Ss^1$-valued vector fields, it is
necessary for $m'$ to have zero degree on each cell of the
grid ${\mathcal R}$. Let us prove this property. For that, let $\mathcal C$ be a full squared cell of $\mathcal R$ having all four sides of the cell of length $\in [\eps^\beta, 4\eps^\beta]$. 
We know that \eqref{condR} holds (in particular, for $\den$ on $\mathcal C$). 
Set $\ds \ka:=\frac{1}{\delta|\log \delta|}=o\left( |\log \eps|\right)$. By Theorem~\ref{thm_main_GL} given in the Appendix, we deduce that $|m'|\geq 1/2$ on $\mathcal R$ and $\degr(m', \partial{\mathcal C})=0$ for small $\eps>0$.

\medskip

\emph{Step 3. Construction of an approximating $\m \Ss^1$-valued vector field $M$ of $m'$}. 
On each \textbf{full} squared cell ${\mathcal C}$ of $\mathcal R$ of side of length of order $\eps^\beta$, we
define
$u=u_\delta \in H^1({\mathcal C},\R^2)$ to be a  minimizer of 
\[ \min \bigg\{ \int_{\mathcal C} \den(u)\, dx\,:\, u=m'\, \textrm{ on }\,  \partial {\mathcal C}\bigg\}.  \]
Putting together all the cells, $u$ is now defined in the whole $\hul(\mathcal R)$ (which is $[-1,1]\times \m T$)  and satisfies \eqref{1b}.  Extend $u$ by $m^\pm$ for $\pm x_1\geq 1$ so that $u$ is defined now in $\Omega$ and is periodic in $x_2$. Moreover, by construction, 
\[ \int_{\Om} \den(u)\, dx\leq \int_{\Om} \den(m')\, dx. \]
By 
Theorem~\ref{thm_main_GL} given in the Appendix, we have 
\[ \eta:=\sup_{\Omega}\big||u|^2-1\big|\leq C\bigg(\frac{1}{\delta |\log \delta| |\log \eps|} \bigg)^{\frac16-}=o(1). \] 
In particular,
\[ |u|^2\geq 1-\eta\quad \textrm{ in }\quad \Om. \]
Therefore we define $M \in H^1(\Omega, \m \Ss^1)$ by
\[ M:=\frac{u}{|u|} \quad \textrm{in} \quad \Om. \]
So, $M$ satisfies \eqref{1b}.
We deduce that $|\nabla u|^2\geq |u|^2 |\nabla M|^2\geq (1-\eta) |\nabla M|^2$ in $\Om$ and
\begin{align}
\label{esti_harmon}
(1-\eta)\int_{\Om} |\nabla M|^2\, dx  \leq \int_{\Om} |\nabla u|^2\, dx &\leq \int_{\Om} \den(u)\, dx \leq \int_{\Om} \den(m')\, dx \leq  E_\delta(m).
\end{align}
We prove now \eqref{amel1} which a local version of \eqref{esti_harmon}. Using the above constructed grid, we cover $T(x, r-2\eps^\beta)\cap ([-1,1]\times \m T)$ by a subgrid $\cup_{k\in K} {\mathcal C}_k$, with $K$ finite, of \textbf{full} cells of $\mathcal R$ such that $\cup_{k\in K} {\mathcal C}_k\subset T(x, r)\cap  ([-1,1]\times \m T)$. Therefore we have:
\begin{align*}
\MoveEqLeft (1-\eta)\int_{T(x, r-2\eps^\beta)} |\nabla M|^2\, dx = (1-\eta)\int_{T(x, r-2\eps^\beta)\cap  ([-1,1]\times \m T)} |\nabla M|^2\, dx\\
&\le (1-\eta)\int_{\cup_{k\in K} {\mathcal C}_k} |\nabla M|^2\, dx \le \int_{\cup_{k\in K} {\mathcal C}_k} |\nabla u|^2\, dx \\
& \leq \int_{\cup_{k\in K} {\mathcal C}_k} \den(u)\, dx  \leq \int_{\cup_{k\in K} {\mathcal C}_k} \den(m')\, dx  \leq \int_{T(x, r)} \den(m')\, dx.
\end{align*}

\medskip

The goal is now to prove that the $\m \Ss^1$-valued vector field $M$ approximates
$m'$ in $L^2(\Om, \RR^2)$ and the $\dot{H}^1$-seminorm of $M$ is comparable with the one of $m'$.

\medskip

\emph{Step 4. Estimate $\|\nabla(M-m')\|_{L^2(\Om)}$.} Indeed, by \eqref{esti_harmon}, we have:
$$(1-\eta) \int_{\Om} |\nabla M|^2\, dx\leq E_\delta(m)\quad \textrm{and} \quad \int_{\Om}  |\nabla m|^2\, dx\leq  E_\delta(m). $$ Thus, the second estimate in
\eqref{cond-approx} holds.

\medskip

\emph{Step 5. Estimate $\|M-m'\|_{L^2(\Om)}$.}
By
Poincar\'e's inequality, we have for each \textbf{full} cell ${\mathcal C}$ of
${\mathcal R}$: \be \label{eve1} \int_{ {\mathcal C}}
\bigg| M-\fint_{\partial {\mathcal C}} M
\bigg|^2\, dx\leq C \eps^{2\beta} \int_{{\mathcal C}} |\nabla
M|^2\, dx \ee 
and \be
\label{eve2} \int_{ {\mathcal C}} \bigg|m'-\fint_{\partial{\mathcal
C}} m' \bigg|^2\, dx\leq C
\eps^{2\beta} \int_{{\mathcal C}} |\nabla m'|^2\, dx. \ee Writing $m'=\rho v'$ with $\rho\geq \frac 1 2$ on ${\mathcal R}$ (by Theorem \ref{thm_main_GL} in Appendix), we have $v'=M$ on ${\mathcal R}$ and by Jensen's inequality, we
also compute
\begin{align}
\MoveEqLeft \int_{ {\mathcal C}} \bigg| \fint_{ \partial {\mathcal
C}} (M-m') \bigg|^2\, dx = \int_{{\mathcal C}} 
\bigg| \fint_{ \partial {\mathcal C}} (v'-m') \bigg|^2\, dx=\h^2({\mathcal C}) \fint_{ \partial{\mathcal C}} (1-\rho)^2\, d\h^1 \nonumber \\
&  \le C \eps^{\beta} \int_{\partial {\mathcal C}}(1-\rho^2)^2\, d\h^1 \leq  C \eps^{\beta+2} \int_{\partial {\mathcal C}}
\den(m')\, d\h^1. \label{eve3}
\end{align}
Summing up \eqref{eve1}, \eqref{eve2} and \eqref{eve3} over all
the cells ${\mathcal C}$ of the grid ${\mathcal R}$, by \eqref{condR} and \eqref{esti_harmon}, we
obtain
\[  \int_{\Omega} |M-m'|^2\, dx'\leq C\eps^{2\beta} E_\delta(m). \]

\medskip

\emph{Step 6. Proof of \eqref{amel2}.} Let $h(m')=\nabla U(m')$ and $h(M)=\nabla U(M)$ be the unique minimal stray fields given by \eqref{eqh}.
By uniqueness and linearity of the stray field, we deduce that $h(m')-h(M)$ is the minimal stray field associated to $m'-M$, i.e.,
\[ h(m')-h(M)=h(m'-M). \]
Therefore, we have by interpolation:
\begin{align*}
\MoveEqLeft \int_{\Om\times \R} |h(M)-h(m')|^2 \, dx dz \stackrel{\eqref{eqh2}}{=}\frac 1 2 \int_{\Omega}
\left|\,|\nabla|^{-1/2}\nabla\cdot (M-m')\right|^2\, dx\\
&\le C  \int_{\Omega} \left|\,|\nabla|^{1/2}(M-m')\right|^2\, dx\\
&\le C  \bigg(\int_{\Omega} |M-m'|^2\, dx\bigg)^{1/2}  \bigg(\int_{\Omega} |\nabla(M-m')|^2\, dx\bigg)^{1/2}\\
&\stackrel{\eqref{cond-approx}}{\le} C \eps^\beta E_\delta(m).
\end{align*}

\medskip

\emph{Step 7. End of the proof.} It remains to prove \eqref{main_ineg}. Indeed, by \eqref{esti_harmon} and Step 6, we have:
\begin{align*}
E_\delta(M)&=\int_{\Om} |\nabla M|^2\, dx+ \frac 1 \delta \int_{\Om\times \R} |h(M)|^2 \, dx dz\\
&\leq \frac{1}{1-\eta}\int_{\Om} \den(m')\, dx +\frac 1 \delta \int_{\Om\times \R} |h(m')|^2 \, dx dz+C\big(\frac{\eps^\beta}{\delta}\big)^{1/2} E_\delta(m)
\\
&\leq (1+C\eta) E_\delta (m)
\end{align*}
because $\ds \big(\frac{\eps^\beta}{\delta}\big)^{1/2}\leq\eta$ by \eqref{reg_eta_eps}.
\end{proof}

\medskip Observe that Theorem \ref{lem_approx} remains true in the context of the energy $\tilde E_\delta$ on the domain $\omega$.

\begin{proof}[Proof of Theorem \ref{thm_comp}]
It is a direct consequence of the approximation result in Theorem \ref{lem_approx} and of the compactness result in \cite{IO1} (see Theorem 4 in \cite{IO1}, and also Theorem 2 in \cite{IO2}).
\end{proof}

\section{Optimality of the N\'eel wall}

We present now the proof of Theorem \ref{thm:optimality}. The similar result in the case of $\Ss^1$-valued magnetizations 
was proved by Ignat and Otto in \cite{IO1} (see Theorem 1 in \cite{IO1}). Theorem \ref{thm:optimality} represents the extension to the case of $\Ss^2$-valued magnetizations.

\begin{proof}[Proof of Theorem \ref{thm:optimality}]
 Let $M_\delta$ be the approximating $\Ss^1$-map of $m_\delta$ constructed in Theorem~\ref{lem_approx}. By \eqref{hypo} and \eqref{main_ineg}, we deduce that 
 \[ \limsup_{\delta\to 0} \delta|\log \delta| E_\delta(M_\delta)\leq \frac{\pi}{2} (1-\ma)^2. \]
Then Theorem 1 in \cite{IO1} implies the existence of a sequence $\delta=\delta_n$ and $x_1^*\in [-1,1]$ such that 
\[ M_\delta-m^*\to 0\quad \textrm{in} \quad L^2(\Om), \]
which by \eqref{cond-approx} entails $m_\delta- m^*\to 0$ in $L^2(\Om)$.  Moreover, the $x_2$-periodic uniformly bounded sequence of measures $\mu_\delta(M_\delta)$ has the property that $$\mu_\delta(M_\delta) \rightharpoonup \mu_0 \, \, *\textrm{-weakly in } \, {\mathcal M}(\Omega\times \R),$$ where $\mu_0$ is a non-negative $x_2$-periodic measure in $\Omega\times \R$. Our first aim is to prove that
\be 
\label{form_conv_mes}
 \mu_0=(1-\ma)^2\, {\mathcal H}^1\llcorner \{x_1^*\}\times \mathbb{T}\times\{0\} .\ee
Indeed, let us define the function $\chi:\Omega\to \R$ by
\[ \chi= \pm \frac 1 2  \quad \textrm{if } \, \pm x_1\geq \pm x_1^*.  \]
Then, by Step 3 of the proof of Theorem 1 (and Remark 4) in \cite{IO1}, it follows that
\[ \frac{1}{4\alpha}\int_{\Omega\times\RR}\zeta\,
d\mu_0=\int_{\Omega}\nabla\zeta \cdot m^* \chi \,
dx+(1-\alpha)\int_{\Omega}\zeta|D\chi| \]
for every $\alpha\in (0,1)$ and for every smooth test function $\zeta:\RR^3\to \RR$ which is $1-$periodic
in $x_2$ with compact support in $x_1$ and $x_3$. Then we compute 
\[ \int_{\Omega}\nabla\zeta \cdot m^* \chi \,
dx=-\ma \int_{0}^1\zeta(x_1^*, x_2, 0) \, dx_2\]
so that by setting $\ds \alpha:=\frac{1-\ma}{2}$, we conclude that
\[ \int_{\Omega\times\RR}\zeta\, d\mu_0=4\alpha^2 \int_{\{x_1^*\}\times [0,1)\times \{0\}}\zeta  \, d\h^1, \]
i.e., $\mu_0=(1-\ma)^2 {\mathcal H}^1\llcorner \{x_1^*\}\times \mathbb{T}\times\{0\}$.

\bigskip

It remains to show that $\mu_\delta(m_\delta)\rightharpoonup \mu_0$ in ${\mathcal M}(\Omega\times \R)$. Indeed, by \eqref{hypo}, 
there exists a $x_2$-periodic nonnegative measure $\mu\in {\mathcal M}(\Omega\times \R)$ such that up to a subsequence, 
\be
\label{conv_mes}
\mu_\delta(m_\delta)
 \rightharpoonup \mu \, \, \textrm{weakly $^\ast$ in } \, {\mathcal M}(\Omega\times \R).\ee
The aim is to show that 
\[ \mu=\mu_0. \]
Indeed, let $r>0$ and $x=(x_1^*, x_2)\in \Om$ with $x_2\in [0,1)$. We consider an arbitrary smooth nonnegative test function $\zeta:\R^3\to [0,+\infty)$ that is $x_2$-periodic with compact support in $x_1$ and $x_3$ such that $\zeta\equiv 1$ on $T(x,r)\times (-\gamma,\gamma)$ for some fixed $\gamma>0$ (recall that $T(x,r)$ is the full closed square centered at $x$ of side of length $2r$). Within the notation \eqref{GL-en}, by Theorem \ref{lem_approx}, we have for $\beta=1/2$ and $\ds \eta=\bigg(\frac{1}{\delta |\log \delta| |\log \eps|} \bigg)^{\frac16-}$:
\[ \delta |\log \delta|\int_{T(x,r-2\eps^\beta)}|\nabla M_\delta|^2\stackrel{\eqref{amel1}}{\leq} (1+C\eta) \delta |\log \delta| \int_{\Om} \den(m_\delta')\zeta(x,0)\, dx \]
and
\begin{align*}
 |\log \delta| \int_{T(x,r-2\eps^\beta)\times (-\gamma, \gamma)}&|h(M_\delta)|^2\, dx dz\leq  |\log \delta| \int_{\Om\times \R}|h(M_\delta)|^2\zeta(x,z)\, dx dz \\
& \stackrel{\eqref{amel2}}{\leq} |\log \delta| \int_{\Om\times \R}|h(m'_\delta)|^2\zeta(x,z)\, dx dz+\|\zeta\|_{L^\infty} O(\delta {\eps^\beta})^{1/2} |\log \delta| E_\delta(m_\delta).
\end{align*}
Therefore, by \eqref{reg_eta_eps}, we obtain:
$$
\liminf_{\delta\to 0}\int_{T(x,r-2\eps^\beta)\times (-\gamma, \gamma)}\, d\mu_\delta(M_\delta) 
\leq \liminf_{\delta\to 0} \int_{\Om\times \R}
\zeta(x,z)\, d\mu_\delta(m_\delta)\stackrel{\eqref{conv_mes}}{=} \int_{\Om\times \R}
\zeta(x,z)\, d\mu.
$$
On the other hand, by \eqref{form_conv_mes}, one has 
\[ 2r(1-\ma)^2=\mu_0(\dot{T}(x,r)\times\{0\})\leq   
\liminf_{\delta\to 0}\int_{T(x,r-2\eps^\beta)\times (-\gamma, \gamma)}\, d\mu_\delta(M_\delta), \]
where $\dot{T}(x,r)$ is the interior of $T(x,r)$. Thus, we conclude
\begin{align*}
\MoveEqLeft 2r(1-\ma)^2\leq  \int_{\Om\times \R}
\zeta(x,z)\, d\mu.
\end{align*}
Taking infimum over all test functions $\zeta$ and then infimum over $\gamma\to 0$, we deduce
\[ 2r(1-\ma)^2\leq \mu({T}(x,r)\times\{0\}).\]
Setting 
\[ \mathrm{Line} :=\{x_1^*\}\times \Tt \times\{0\}\quad \textrm{and} \quad \mu_{L}:=\mu\, \llcorner \, \mathrm{Line}, \]
we deduce that $\mu_L(S)\geq (1-\ma)^2\h^1(S)=\mu_0(S)$ for every (closed) segment $S\subset \mathrm{Line}$; therefore, $\mu_0\leq \mu_L\leq \mu$ as measures in ${\mathcal M}(\Omega\times \R)$. In particular, 
$$
\mu_0(\mathrm{Line})  \leq \mu_L(\mathrm{Line})\leq \mu(\Om\times \R)\leq \liminf_{\delta\to 0}  \int_{\Om\times \R} d\mu_{\delta}(m_\delta)\stackrel{\eqref{hypo}}{\leq}\mu_0(\mathrm{Line}),
$$
thus, 
\[ \mu=\mu_L=\mu_0 \textrm{ in } \, {\mathcal M}(\Omega\times \R).\]
Now \eqref{low_est} is straightforward.
\end{proof}

\section{Asymptotics of the Landau-Lifschitz-Gilbert equation}

We start now the study of the dynamics of the magnetization. We assume Theorem \ref{thm:cauchy} holds and postpone its proof to the next Section; our goal here is to establish Theorem \ref{thm:dyn}. 
Let $\{m_\delta^0\}_{0<\delta<1/2}$ be a family of initial data as in Theorem \ref{thm:dyn} and let 
\[ m_\delta=(m'_\delta, m_{3,\delta}):[0,+\infty)\times \omega\to \Ss^2 \]
be any family of global weak solutions to \eqref{eq:LLG} satisfying \eqref{eq:bdc2}, \eqref{eq:bdc1} and the energy estimate \eqref{energy_eq}.
Throughout this section we assume that (A1), (A2) and (A3) are satisfied.

\medskip

Let us also recall the energy inequality, on which we will crucially rely:
\begin{equation}
\tag{\ref{energy_eq}}
\tilde E_{\delta} (\md(t)) + \frac{\alpha}{2\beta} \int_0^t \| \partial_t \md(s) \|^2_{L^2(\omega)} ds \le  \tilde{E}_{\delta} (m_\delta^0) \exp \left(\frac{4}{\alpha \beta} \int_0^t  \| v_\delta (s) \|_{L^\infty(\omega)}^2 ds \right).
\end{equation}
In particular, it follows from \eqref{energy_eq} and the assumption (A3) on $v_\delta$ that
\begin{equation}
\label{energy-engin}
\tilde E_{\delta} (\md(t)) + \frac{\nu}{2\lambda} \int_0^t \| \partial_t \md(s) \|^2_{L^2(\omega)} ds \le  \tilde E_{\delta} (m_\delta^0) \exp \left(CT \right), \, \, 0<t\leq T,
\end{equation}
and therefore it follows from the energy bound (A1) on the initial data that
\begin{equation}\label{ineq:energy-t}
\forall T>0, \quad \sup_{0<\delta<1/2} \delta |\log \delta |\tilde E_\delta(m_\delta(T))<+\infty.
\end{equation}
Also, we infer the following bound on the time derivative  in $L^2_\loc([0,+\infty)\times \omega)$:
\begin{equation}\label{ineq:energy-time}
\forall T>0, \quad \|\partial_t \md\|_{L^2([0,T], L^2(\omega))}\leq C\exp(CT)\frac{\sqrt{\lambda}}
{\sqrt{\delta |\log(\delta)|}}.
\end{equation}
This is however not a uniform bound on $\lambda/(\delta |\log(\delta)|)$ as $\delta \to 0$ in the regime \eqref{reg:lambda}. Nevertheless, in the next proposition, we will establish a uniform bound of $\{\partial_t \md\}$ in the weaker space $L^2_\loc(H^{-1})$:

\begin{pro}\label{prop:equicont}Under the assumptions of Theorem \ref{thm:dyn},  we have for all $T>0$:
\begin{equation*}
\|\partial_t \md\|_{L^2([0,T], H^{-1}(\omega))}\leq \frac{C(T)}{\sqrt{\delta |\log(\delta)|}} \left( \lambda + \frac{ \lambda \e}{\delta} + \e^2  \right).
\end{equation*}
\end{pro}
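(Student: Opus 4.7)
The plan is to read $\partial_t m_\delta$ directly off equation \eqref{eq:LLG},
\begin{equation*}
\partial_t m_\delta = -\alpha\, m_\delta \times \partial_t m_\delta \;-\; \beta\, m_\delta \times \nabla \tilde E_\delta(m_\delta) \;-\; (v_\delta \cdot \nabla) m_\delta \;+\; m_\delta \times (v_\delta \cdot \nabla) m_\delta,
\end{equation*}
and estimate each of the four terms on the right separately in the $L^2([0,T], H^{-1}(\omega))$ norm. The three uniform inputs I will use are: the pointwise energy bound \eqref{ineq:energy-t}, which gives $\|\nabla m_\delta(t)\|_{L^2} \le C(T)/\sqrt{\delta|\log\delta|}$ and $\|m_{3,\delta}(t)\|_{L^2} \le C(T)\e/\sqrt{\delta|\log\delta|}$; the time-integrated bound \eqref{ineq:energy-time} on $\|\partial_t m_\delta\|_{L^2_t L^2_x}$; and the pointwise $L^2$ bound $\|\p(m'_\delta(t))\|_{L^2} \le \|\nabla m'_\delta(t)\|_{L^2}$, which follows from the spectral formula $\p=-\nabla|\nabla|^{-1}\nabla\cdot$ by expanding $\nabla\cdot m'$ on the Dirichlet eigenbasis of $-\Delta$ and noting that $\|\nabla|\nabla|^{-1}\nabla\cdot m'\|_{L^2}^2 = \|\nabla\cdot m'\|_{L^2}^2$.

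For the energy-gradient contribution $\beta m_\delta \times \nabla \tilde E_\delta(m_\delta)$ I decompose via \eqref{nablaE_bis} into pieces coming from $-2\Delta m$, $\delta^{-1}\p(m')$ and $2\e^{-2} m_3$. The piece $\beta m \times \Delta m$ is bounded in $H^{-1}$ by $\beta\|\nabla m\|_{L^2}$ using the integration-by-parts identity recalled just after Definition \ref{def:weak}; the other two pieces pass through the embedding $L^2 \hookrightarrow H^{-1}$, since $|m_\delta|=1$ gives $\|m_\delta \times u\|_{L^2}\le \|u\|_{L^2}$. Inserting $\beta=\lambda\e$ together with the uniform bounds above, the three pieces sum pointwise in $t$ to at most $C(T)(\lambda\e + \lambda\e/\delta + \lambda)/\sqrt{\delta|\log\delta|} \le C(T)(\lambda + \lambda\e/\delta)/\sqrt{\delta|\log\delta|}$, using $\lambda\e \le \lambda$. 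Integration over $[0,T]$ only adds a factor $\sqrt{T}$, absorbed into $C(T)$.

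The damping and two drift terms I estimate directly in $L^2_t L^2_x$, which embeds continuously in $L^2_t H^{-1}$. Since $|m_\delta|=1$, the damping term is controlled by $\alpha \|\partial_t m_\delta\|_{L^2_t L^2_x}$, which \eqref{ineq:energy-time} combined with $\alpha=\nu\e$ bounds by $C(T)\,\e\sqrt{\lambda}/\sqrt{\delta|\log\delta|}$. For the drift terms, assumption (A3) gives $\|v_\delta\|_{L^\infty} \le \sqrt{\alpha\beta} = \e\sqrt{\nu\lambda}$, and combining with $\|\nabla m_\delta\|_{L^2_t L^2_x} \le C(T)/\sqrt{\delta|\log\delta|}$ yields the same order $\e\sqrt{\lambda}/\sqrt{\delta|\log\delta|}$. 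The AM-GM inequality $\e\sqrt{\lambda} \le (\e^2+\lambda)/2$ then converts all three contributions into $C(T)(\e^2+\lambda)/\sqrt{\delta|\log\delta|}$, and adding up with the previous paragraph yields the target bound $C(T)(\lambda + \lambda\e/\delta + \e^2)/\sqrt{\delta|\log\delta|}$.

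The main obstacle, I expect, is the damping term: the only control available on $\partial_t m_\delta$ in $L^2$ is \eqref{ineq:energy-time}, which is not uniform in $\delta$ (it diverges like $\sqrt{\lambda/(\delta|\log\delta|)}$ under the regime \eqref{reg:lambda}), and it is precisely the smallness $\alpha=\nu\e$ together with the AM-GM trick that brings the contribution below $(\e^2+\lambda)/\sqrt{\delta|\log\delta|}$. Everything else is routine bookkeeping of the interplay between the three small scales $\delta,\e,\lambda$ through the pointwise energy bound.
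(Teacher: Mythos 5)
Your proof is correct and follows essentially the same route as the paper's: read $\partial_t m_\delta$ off the equation, bound the damping and drift terms in $L^2_t L^2_x$ using \eqref{ineq:energy-time}, (A3) and AM-GM to trade $\e\sqrt{\lambda}$ for $\e^2+\lambda$, and bound the precession term $\beta\, m_\delta\times\nabla\tilde E_\delta(m_\delta)$ in $H^{-1}$ by splitting \eqref{nablaE_bis} into the three pieces and using the energy bound \eqref{ineq:energy-t}. The only cosmetic difference is that you spell out the spectral argument for $\|\p(m')\|_{L^2}\le C\|\nabla m'\|_{L^2}$, which the paper leaves implicit.
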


\begin{proof}Let $T>0$.
By \eqref{eq:LLG} we have on $[0,+\infty)\times \omega$:
\begin{equation*}
\partial_t \md=-\alpha \md\times \dt \md- \beta  \md\times \nabla \tilde E_{\delta}(\md)-(v_\delta\cdot \nabla )\md+ \md\times (v_\delta\cdot \nabla)\md.
\end{equation*}
First, the inequality \eqref{ineq:energy-time} yields
\begin{equation*}
\begin{split}
\left\|  \alpha \md\times \dt \md  \right\|_{L^2([0,T],L^2(\omega))}&\leq C\exp(CT) \frac{\eps\sqrt{\lambda}}{\sqrt{ \delta |\log(\delta)|}}\\
&\leq C\exp(CT)\left(\frac{\eps^2}{ \sqrt{\delta |\log(\delta)|}}+\frac{\lambda}{ \sqrt{\delta |\log(\delta)|} }\right).
\end{split}
\end{equation*}
Next, by  (A3) we have
\begin{equation*}
\begin{split}
\left\| (v_\delta\cdot \nabla )\md   \right\|_{L^2([0,T],L^2(\omega))}&+\left\|  \md\times (v_\delta\cdot \nabla)\md \right\|_{L^2([0,T],L^2(\omega))}\\
&\leq C\sqrt{T}\exp(CT)\frac{\eps\sqrt{\lambda} }{\sqrt{\delta |\log(\delta)|}}\\
&\leq C\sqrt{T}\exp(CT) \left(\frac{\eps^2}{ \sqrt{\delta |\log(\delta)|}}+\frac{\lambda}{ \sqrt{\delta |\log(\delta)|} }\right).
\end{split}
\end{equation*}
Finally, recalling \eqref{nablaE_bis}, we have
\begin{align}
\nonumber
&\beta \left\| \md(t)\times  \nabla \tilde E_{\delta}(\md)(t)\right\|_{H^{-1}(\omega)}  \\
& \label{estm_ned}\leq
C \left(\lambda \eps \|\nabla \md (t)\|_{L^2(\omega)}+ \frac{\lambda \eps}{\delta} \|\nabla \md (t) \|_{L^2(\omega)}+ \lambda \left\|\frac{m_{3, \delta}(t)}{\eps}\right\|_{L^2(\omega)}\right)\\
\nonumber
&\leq C\exp(CT)\frac{\lambda}{\sqrt{\delta |\log(\delta)|}}  \left( 1+ \frac{\e}{\delta} \right).
\end{align}
Combining the previous estimates we obtain the estimate of the Proposition.
\end{proof}

\medskip

We now prove Theorem \ref{thm:dyn}:

\begin{proof}[Proof of Theorem \ref{thm:dyn}] Let $T>0$.
By Proposition \ref{prop:equicont} and  the assumptions \eqref{reg_eta_eps} and  \eqref{reg:lambda} on $\eps$, $\delta$ and $\lambda$, the family $\{\partial_t m_{\delta}\}_{0<\delta <1/2}$ is bounded in $L^2([0,T], H^{-1}(\omega))$. 
On the other hand, $\{m_{\delta}\}_{0<\delta <1/2}$ is bounded in $L^\infty([0,T], L^2(\omega))$. Therefore by Aubin-Lions Lemma (see e.g. Corollary 1 in \cite{Simon}) it is relatively compact in $\q C([0,T], H^{-1}(\omega))$.  Thus by a diagonal argument there exists $\delta_n\to 0$ and $m\in \q C([0,+\infty), H^{-1}(\omega))$ such that  $m_{\delta_n}\to m$ in  $\q C([0,T], H^{-1}(\omega))$ for all $T>0$ as $n\to \infty$.

On the other hand, let $t\in [0,+\infty)$. In view of the bound \eqref{ineq:energy-t} we conclude from Theorem \ref{thm_comp}
that any subsequence of $(m_{\delta_n}(t))_{n\in \mathbb{N}}$ is relatively compact in $L^2(\omega)$. Since $m_{\delta_n}(t)\to m(t)$ in $H^{-1}(\omega)$  we infer that the full sequence $m_{\delta_n}(t) \to m(t)=(m'(t),0)$ strongly in $L^2(\omega)$ as $n\to \infty$, where $|m'(t)|=1$, $m_3(t)=0$ almost everywhere and $\nabla\cdot m'(t)=0$ in the sense of distributions. In particular, $t\mapsto \|m(t)\|_{L^2(\omega)}=|\omega|^{1/2}\in \q C([0,+\infty),\R)$. 

Let us now prove that $m \in \q C([0,+\infty),L^2(\omega))$. Indeed, consider a sequence of times $t_n \ge 0$ converging to $t \ge 0$. As $m \in \q C([0,T], H^{-1}(\omega))$ and $m(t_n)$ is bounded in $L^2(\omega)$, we infer that $m(t_n) \tendf m(t)$ weakly in $L^2(\omega)$. But we just saw that $\| m(t_n) \|_{L^2(\omega)} \to \| m(t) \|_{L^2(\omega)}$, so that in fact $m(t_n) \to m(t)$ strongly in $L^2(\omega)$. This is the desired continuity.

Finally, Proposition \ref{prop:equicont} and  \eqref{reg:lambda} imply that 
\[ \partial_t m_{\delta_n}\to 0=(\partial_t m',0)\quad \text{in}\quad \mathcal{D}'([0,+\infty)\times \omega), \] which concludes the proof.
\end{proof}

\section{The Cauchy Problem for the Landau-Lischitz-Gilbert equation}

In this section we handle the Cauchy problem for the Landau-Lifshitz-Gilbert equation in the energy space.

\begin{proof}[Proof of Theorem \ref{thm:cauchy}]
We use an approximation scheme by discretizing in space. We first introduce some notation.

\bigskip

\emph{Notation and discrete calculus:}

\bigskip

Let $n \ge 1$ be an integer, $h = 1/n$ and $\omega_h = h \m Z^2  \cap \overline{\omega}$.
For a vector field $m^h : \omega_h \to \m R^3$, we will always assume $x_2$-periodicity in the following sense: 
\[ \forall x_1  \in h \m Z \cap [-1,1], \  e \in \m Z, \quad m^h (x_1, 1+ eh) = m^h (x_1, eh). \] 
We then define the differentiation operators as follows: for
$x = (x_1,x_2) \in \omega_h,$
\begin{align*}
\partial^h_1 m^h (x) & = \begin{dcases}
\frac{1}{2h} (  m^h (x_1 + h,x_2) - m^h (x_1 - h,x_2)) & \text{if } |x_1| <1, \\
\pm \frac{1}{2h} (m^h(x_1,x_2) - m^h(x_1 \mp h,x_2)) & \text{if } x_1 = \pm 1,
\end{dcases} \\
\partial^h_2 m^h (x) & =  \frac{1}{2h} (  m^h (x_1,x_2+h) - m^h (x_1, x_2 -h)).
\end{align*}
Observe that $\partial_1^h$ is the half sum of the usual operators $\partial_{1+}^h$ and $\partial_{1-}^h$ vanishing at the boundary $x_1 =  1$ and $x_1=-1$ respectively.
Also we define the discrete gradient and laplacian: denoting $(\hat e_1, \hat e_2)$ the canonical base of $\m R^2$, we let
\begin{gather*}
\nabla^{h}  m^h = \sum_{k=1}^2  \partial_k^{h} m^h \otimes  \hat e_k, \quad \Delta^h m^h  = \sum_{k=1}^2 \partial_k^{h} \partial_k^{h} m^h.
\end{gather*}
We introduce the scalar product
\[ \langle m^h , \tilde m^h \rangle_h = h^2 \sum_{x \in \omega_h} m^h (x) \cdot \tilde m^h (x) \]
and the $L^2_h$-norm and $\dot H^1_h$-seminorm:
\[ | m^h |_{L^2_h}^2 := \langle m^h , m^h \rangle_h, \quad |m^h |_{\dot H^1_h}^2 := \langle \nabla^{h} m^h, \nabla^{h} m^h \rangle_h. \]
Then we have the integration by parts formulas:
\begin{align*}
\langle \partial_1^{h} m^h, \tilde m^h \rangle_h & = -  \langle m^h ,\partial_1^{h} \tilde m^h \rangle_h + h \sum_{x  \in \omega_h, \,  x_1 =1}  m^h (x) \tilde m^h (x) -  h \sum_{x \in \omega_h, \,  x_1 = -1}  m^h (x) \tilde m^h(x), \\
\langle \partial_2^{h} m^h , \tilde m^h \rangle_h & = -  \langle m^h ,\partial_2^{h} \tilde m^h \rangle_h,
\end{align*}
where we used the above boundary conditions and periodicity.
 
We now define the sampling and interpolating operators $S^h$ and $I^h$.
We discretize a map $m: \omega \to \m R^3$ by defining $ S^h m : \omega_h \to \m R^3$ as follows:
\[ S^h m(x) = \begin{cases}
 \ds \frac{1}{h^2} \int_{C_x^h} m(y) dy & \text{if } x_1 <1, \\
 m(x) & \text{if } x_1=1,
 \end{cases}  \]
where $C_x^h = \{ y \in \omega \mid x_k \le y_k < x_k + h, \ k=1,2 \}$.
We will also identify $S^h m$ with the function $\overline\omega \to \m R^3$ which is constant on each cell $C_x^h$ for $x \in \omega_h$ with value $S^h m(x)$. With this convention, $S^h m$ is the orthogonal projection onto piecewise constant functions on each cell $C_x^h$ in $L^2(\omega)$. Also we have $| S^h m |_{L^2_h} = \| S^h m \|_{L^2(\omega)}$, and
\begin{gather} \label{S_cont}
| S^h m |_{L^2_h} \le \| m \|_{L^2(\omega)}, \quad | \nabla^h S^h m |_{L^2_h} \le \| \nabla m \|_{L^2(\omega)}, \quad | S^h m |_{L^\infty_h} \le \| m \|_{L^\infty(\omega)}.
\end{gather}
We interpolate a discrete map $m^h : \omega_h \to \m R^3$ to $I^h m^h: \omega \to \m R^3$
by a quadratic approximation as follows:
if $x \in C_y^h$ with $y \in \omega_h$, we set
\begin{gather*}
I^h m^h (x) = m^h(y) + \sum_{k=1}^2  \partial_k^{h+} m^h(y) (x_k - y_k) + \partial_1^{h+} \partial_2^{h+} m^h(y) (x_1 - y_1) (x_2 - y_2), \\
\text{where } \partial_k^{h+} m^h(y)  = \begin{dcases}
\frac{1}{h} (  m^h(y + h \hat e_k) - m^h (y)) & \text{if } k=2 \text{ or } (k=1 \text{ and } y_1 <1), \\
0 & \text{if } k=1 \text{ and } y_1 = 1.
\end{dcases}
\end{gather*}
One can check that $I^h m^h \in H^1(\omega)$ is continuous (it is linear in each variable $x_1$ and $x_2$, and coincide with $m^h$ at every point of $\omega_h$), quadratic on each cell $C_y^h$, and
\begin{gather} \label{eq:cont_dis} 
|m^h|_{L^2_h} \sim \| I^h m^h \|_{L^2(\omega)}, \quad  |\nabla^h m^h |_{L^2_h} \sim \| \nabla I^h m^h  \|_{L^2(\omega)}, \quad |m^h|_{L^\infty_h} = \| I^h m^h \|_{L^\infty(\omega)}
\end{gather} 
(we refer, for example, to \cite{MS98}).

We discretize the non-local operator $\q P$ so as to preserve the structure of a discrete form of $\ds \int | |\nabla|^{-1/2} \nabla \cdot m'|^2$. For this, notice that $|\nabla|^{-1}$ and $|\nabla|^{-1/2}$ naturally act as compact operators on $L^2(\omega)$, and hence if $m^h: \omega_h \to \m R^3$, $|\nabla|^{-1} m^h \in L^2(\omega)$ has a meaning. Also observe that due to Dirichlet boundary conditions, $d/dt$ commutes with $(-\Delta)^{-1}$, and hence with any operator of the functional calculus: in particular,
\[ \frac{d}{dt} | \nabla|^{-1} m =  | \nabla|^{-1} \frac{dm}{dt}. \]
Therefore we define for $m^h: \omega_h \to \m R^3$ the discrete operator:
\[ \q P^h m^h{}' := - \nabla^h S^h(|\nabla|^{-1} \nabla^h \cdot m^h{}'). \]
Then as $\| \nabla^h S^h m \|_{L^2_h} \le C \|\nabla m \|_{L^2(\omega)}$ we have
\begin{align} \label{P_cont}
\|  \q P^h m^h{}' \|_{L^2_h} \le C \| |\nabla|^{-1} \nabla^h \cdot m^h{}' \|_{\dot H^1(\omega)} \le C \| \nabla^h \cdot m^h{}' \|_{L^2(\omega)} \le C \| \nabla^h m^h{}' \|_{L^2_h}.
\end{align}

\emph{Step 1: Discretized solution and uniform energy estimate.}

Let 
\[ v^h(t) := S^h v(t) : \omega_h \to \m R^3 \quad \text{and}  \quad m^h_0(x) :=  \frac{1}{|S^h(m_0)(x)|} S^h (m^0)(x). \]
We consider the solution $m^h(t) : \omega_h  \to \m R^3$ to the following discrete ODE system: for $x= (x_1,x_2) \in \omega_h $ such that $|x_1| <1$, then
\begin{gather} \label{eq:mh}
\left\{ \begin{aligned}
&\frac{dm^h}{dt} + m^h \times \left( \alpha \frac{dm^h}{dt} + \beta \left( -2\Delta^h m^h + \left( \frac{1}{\delta} \q P^h(m^h{}'), \frac{2}{\e^2} m^h_3 \right) \right) \right. \\
& \qquad \left. \vphantom{\int} - (v^h \cdot \nabla^h) m^h -  m^h \times (v^h \cdot \nabla^h) m^h \right) =0 \\
& m^h(0,x) =  m^h_0(x),
\end{aligned} \right.
\end{gather}
and at the boundary
\begin{gather} \label{eq:mh_bd}
m^h(t,-1,x_2) = m^h_0(-1,x_2), \quad m^h(t,1,x_2) = m^h_0(1,x_2).
\end{gather}
As the operator $A(m^h) : \mu \mapsto \mu + \alpha m^h \times \mu$ is (linear and) invertible, this ODE takes the form
\[ \frac{dm^h}{dt} = A(m^h)^{-1}( \Phi(m^h)), \]
where 
\[ \Phi (m^h) = m^h \times \left( \beta \left( -2\Delta^h m^h + \left( \frac{1}{\delta} \q P^h(m^h{}'), \frac{2}{\e^2} m^h_3 \right) \right)  - (v^h \cdot \nabla^h) m^h - m^h \times (v^h \cdot \nabla^h) m^h \right) \]
is $\q C^\infty$. Hence the Cauchy-Lipschitz theorem applies and guarantees the existence of a maximal solution. Furthermore, we see that for all $x \in \omega_h$,
\begin{align*}
\frac{d}{dt} |m^h(t,x)|^2 & = 2 \left( m^h(t,x), \frac{d}{dt} m^h(t,x) \right) \\
& = \left( m^h(t,x),  m^h(t,x) \times \left( \alpha  \frac{d}{dt} m^h(t,x) + \Phi(m^h)(t,x) \right) \right) =0.
\end{align*}
This shows that for all $x \in \omega_h$, $| m^h (t,x)| = 1$ remains bounded, and hence $m^h$ is defined for all times $t \in \m R$.

We now derive an energy inequality for $m^h$. For this we take the $L^2_h$ scalar product of \eqref{eq:mh} with $\ds m^h \times \frac{dm^h}{dt}$. Recall that if $a, b, c \in \m R^3$, then $(a\times b) \times c = (a \cdot c) b - (a\cdot b)c$, hence 
\[ (c \times a) \cdot (c \times b) = ((c\times a) \times c) \cdot b) = (a \cdot b) |c|^2 - (c \cdot a) (c \cdot b) \]
 so that for any $\tilde m \in \m R^3$, and pointwise $(t,x) \in [0,+\infty) \times \omega_h$
\[  \left( m^h(t,x) \times \frac{dm^h}{dt} (t,x) \right) \cdot ( m^h(t,x)  \times \tilde m) = \frac{dm^h}{dt} (t,x) \cdot \tilde m. \]
Hence we have the pointwise equalities for $x \in \omega_h$ with $|x_1|<1$:
\begin{align*}
\left( m^h \times \frac{dm^h}{dt} \right) \cdot \left( m^h \times \alpha \frac{dm^h}{dt} \right) = \alpha \left| \frac{dm^h}{dt} \right|^2, \\
\left( m^h \times \frac{dm^h}{dt} \right) \cdot \left( m^h \times 2 ( 0, 0, m^h_3 )^T \right) = \frac{d}{dt} |m^h_3|^2.
\end{align*}
If $x \in \omega_h$ is on the boundary, that is $|x_1|=1$, then $\ds \frac{dm^h}{dt}(0,x) =0$ due to \eqref{eq:mh_bd}, and the previous identities also hold: we can therefore sum over $x \in \omega_h$.
Now consider the term involving the discrete Laplacian. The discrete integration by parts yields no boundary term due to $dm^h/dt$; and of course $d/dt$ commutes with $\nabla^h$. Therefore
\begin{align*}
\MoveEqLeft\left\langle m^h \times \frac{dm^h}{dt},   m^h \times (-2  \Delta^h m^h) \right\rangle_h = -2 \left\langle \frac{dm^h}{dt},     \Delta^h m^h \right\rangle_h = 2 \left\langle \nabla^h \frac{dm^h}{dt}, \nabla^h m^h \right\rangle_h \\
& = \frac{d}{dt} \left\| \nabla^h m^h \right\|^2_h.
\end{align*}

For the nonlocal term, $|\nabla|^{-1/2}$ is a self adjoint operator on $L^2$ due to the Dirichlet boundary conditions: the integration by parts yields no boundary term either. More precisely, as $d/dt$ commutes with all space operators,  we have
\begin{align*}
\MoveEqLeft \left\langle \frac{dm^h{}'}{dt} , \q P^h m^h{}' \right\rangle_{h}  = - \left\langle \nabla^h \frac{dm^h{}'}{dt} , S^h |\nabla|^{-1} \nabla^h \cdot m^h{}' \right\rangle_{h}  \\
&  = \int  \nabla^h \frac{dm^h{}'}{dt} \cdot (|\nabla|^{-1} \nabla^h \cdot m^h{}')  = - \int \left( |\nabla|^{-1/2} \nabla^h \cdot \frac{dm^h{}'}{dt} \right) (|\nabla|^{-1/2} \nabla^h \cdot m^h{}') \\
 & = - \frac{1}{2} \frac{d}{dt} \| |\nabla|^{-1/2} \nabla^h \cdot m^h{}' \|_{L^2}^2.
 \end{align*}
Thus we get
\begin{multline*}
 \alpha \left\| \frac{dm^h}{dt} \right\|_{L^2_h}^2 + \beta \frac{d}{dt} \left( \| \nabla^h m^h \|_{L^2_h}^2 +  \frac{1}{\delta} \| |\nabla|^{-1/2} (\nabla^h \cdot m^h{}') \|_{L^2}^2 + \frac{1}{\e^2} \| m_3^h \|_{L^2_h}^2 \right)  \\
 =  \left\langle  (v^h \cdot \nabla^h) m^h +  m^h \times (v^h \cdot \nabla^h) m^h, \frac{dm^h}{dt} \right\rangle_{L^2_h}.
 \end{multline*}
 Denote
\[ E^h(m^h) =  \| \nabla^h m^h \|_{L^2_h}^2 +  \frac{1}{2\delta} \| |\nabla|^{-1/2} (\nabla^h \cdot m^h{}') \|_{L^2}^2 + \frac{1}{\e^2} \| m_3^h \|_{L^2_h}^2. \]
Now we have
\begin{align*}
\MoveEqLeft
\left| \left\langle (v^h \cdot \nabla^h) m^h +  m^h \times (v^h \cdot \nabla^h) m^h, \frac{dm^h}{dt} \right\rangle_{L^2_h} \right| \\
& \le \sqrt {2} \| v^h \|_{L^\infty_h} \| \nabla^h m^h \|_{L^2} \left\| \frac{dm^h}{dt} \right\|_{L^2_h} \le \sqrt {2} \| v \|_{L^\infty} E^h(m^h)^{1/2}  \left\| \frac{dm^h}{dt} \right\|_{L^2_h} \\
& \le \frac{\alpha}{2} \left\| \frac{dm^h}{dt} \right\|_{L^2_h}^2 +  \frac{4}{\alpha}  \| v \|_{L^\infty}^2 E^h(m^h).
\end{align*}
Thus we obtained:
\[ \frac{\alpha}{2\beta}  \left\| \frac{dm^h}{dt} \right\|_{L^2_h}^2 + \frac{d}{dt} E^h(m^h) \le  \frac{4}{\alpha \beta}   \| v \|_{L^\infty}^2 E^h(m^h). \]
By Gronwall's inequality, we deduce
\begin{multline} \label{eq:energy_dis}
E^h(m^h(t)) + \frac{\alpha}{2 \beta} \int_0^t \left\| \frac{dm^h}{dt}(s) \right\|_{L^2_h}^2 ds
\le E^h(m^h(0)) \exp \left(  \frac{4}{\alpha \beta} \int_0^t \| v(s) \|_{L^\infty}^2 ds \right).
\end{multline}

\emph{Step 2: Continuous limit of the discretized solution}

Notice that $\| v^h \|_{L^\infty} \le \| v \|_{L^\infty}$. Also, as $m_0 \in H^1(\omega)$, then $ m^h(0) \to m_0$ in $H^1(\omega)$ and
\[ E^h(m^h(0)) \to \tilde{E}_\delta(m_0). \]

Fix $T>0$. It follows from \eqref{eq:energy_dis} and \eqref{eq:cont_dis} that the sequence $I^h m^h$ is bounded in $L^\infty([0,T],H^1(\omega))$ and in $\dot H^1([0,T],L^2(\omega))$ (observe that $\partial_t (I^h m^h) = I^h(dm^h/dt)$):
\[ \sup_h \left( \sup_{t \in [0,T]} \| \nabla (I^h m^h)(t) \|_{L^2(\omega)}^2 + \int_0^T \| \partial_t (I^h m^h) (s) \|_{L^2(\omega)} ds \right) < +\infty. \]
As this is valid for all $T \ge 0$, we can extract via a diagonal argument a weak limit $m \in L^\infty_{\loc}([0,+\infty),H^1(\omega)) \cap \dot H^1_{\loc}([0,+\infty),L^2(\omega))$ (up to a subsequence that we still denote $m^h$) in the following sense
\begin{align}
I^h m^h &\stackrel{*}{\tendf} m \quad *\text{-weakly in } L^\infty_{\loc}([0,+\infty),H^1(\omega)), \\
\partial_t (I^h m^h) & \tendf \partial_t m \quad  \text{weakly in } L^2_{\loc}([0,\infty),L^2(\omega)),
\label{conv:Im2} \\
I^h m^h & \to m \quad \text{a.e.} \label{conv:Im3}
\end{align}
By compact embedding, the following strong convergence also holds:
\begin{align*}
I^h m^h & \to m \quad \text{strongly in } L^2_{\mr{loc}}([0,+\infty), L^2(\omega)).
\end{align*}
Then it follows that for all $t \ge 0$,
\begin{align*}
 \tilde{E}_\delta(m(t)) & \le \liminf_{h \to 0}  \tilde E_\delta (I^h m^h (t)) = \liminf_{h \to 0}  E^h(m^h(t)) \\
 & \le \liminf_{h \to 0}  E^h(m^h(0)) \exp \left(  \frac{4}{\alpha \beta} \int_0^t \| v(s) \|_{L^\infty} ^2 ds \right) \\
& \le { \tilde{E}_\delta(m_0)} \exp \left(  \frac{4}{\alpha \beta} \int_0^t \| v(s) \|_{L^\infty}^2 ds \right).
\end{align*}
 This is the energy dissipation inequality.
 
Observe that if $\varphi$ is a test function, then $\nabla^h \varphi \to \nabla \varphi$ in $L^2$ (strongly). Using \eqref{conv:Im3}, it follows classically (cf \cite[p. 224]{Lad73}) that
\[ m^h \to m \quad \text{strongly in } L^2_{\mr{loc}}([0,+\infty), L^2(\omega)). \]
Therefore $|m|=1$ a.e  and
\begin{align*}
 \nabla^h m^h &\tendf \nabla m \quad \text{weakly in } L^2_{\loc}([0,+\infty, L^2(\omega)).
\end{align*}
From there, arguing in the same way, it follows that
\[ m^h \times \Delta^h m^h = \nabla^h \cdot (m^h \times \nabla^h m^h)  \tendf \nabla \cdot (m \times \nabla m) = (m \times \Delta m) \quad  \text{weakly in } \q D'((0,+\infty) \times \omega). \]
Also notice that $\ds \partial_t (I^h m^h) = I^h \frac{dm^h}{dt}$. Hence
\[ \partial_t m^h \tendf \partial_t m \quad  \text{weakly in } L^2_{\loc} ([0,\infty), L^2(\omega)). \]
We can now deduce the convergences of the other nonlinear terms in the distributional sense: 
\begin{align*} 
m^h \times \partial_t m^h & \tendf m \times \partial_t m \quad \text{weakly in } \q D'((0,+\infty) \times \omega), \\
m^h \times ( v \cdot \nabla^h) m^h & \tendf m \times (v \cdot \nabla) m \quad \text{weakly in } \q D'((0,+\infty) \times \omega),
\end{align*}
and
\begin{align*}
\MoveEqLeft m^h \times (m^h \times ( v \cdot \nabla^h) m^h)  =  - (v^h \cdot \nabla^h) m^h \\
& \qquad \tendf  - (v \cdot \nabla) m = m \times (m \times (v \cdot \nabla) m) \quad \text{weakly in }  \q D'((0,+\infty) \times \omega).
\end{align*}
It remains to consider the nonlocal term. As $\nabla^h m^h \tendf \nabla m$ weakly in $L^2_{\loc} ([0,+\infty) , L^2(\omega))$, we have
\[ |\nabla|^{-1} \nabla^h \cdot m^h{}' \tendf |\nabla|^{-1} \nabla \cdot  m' \quad \text{weakly in } L^2_{\loc}([0,\infty), H^{1} (\omega)). \]
But from \eqref{S_cont}, and noticing that $S^h \varphi \to \varphi$ in $H^1$ strongly for any test function $\varphi$ and as $S^h$ is $L^2$-self adjoint, we infer that
\[ S^h |\nabla|^{-1} \nabla^h \cdot m^h{}' \tendf |\nabla|^{-1} \nabla \cdot  m' \quad \text{weakly in } L^2_{\loc}([0,\infty), H^{1} (\omega)), \]
and similarly,
\[ \q P^h(m^h{}') = - \nabla^h S^h( |\nabla|^{-1} \nabla^h \cdot m^h{}') \tendf - \nabla |\nabla|^{-1} \nabla \cdot m =  \q P ( m') \quad \text{weakly in } L^2_{\loc} ([0,\infty), L^2(\omega)). \]
Recalling that $	m^h \to m$ strongly in $L^2_{\mr{loc}}([0,\infty), L^2(\omega))$, we deduce by weak-strong convergence that 
\[ m^h \times \q H^h(m^h) \tendf m \times \q H (m)  \quad \q D' ([0,\infty) \times \omega). \]
This shows that $m$ satisfies \eqref{eq:LLG} on $[0,\infty) \times\omega$ in the sense of Definition \ref{def:weak}. 
\end{proof}

\appendix

\section{A uniform estimate}

For $\eps>0$ small, we consider the \textbf{full} cell ${\mathcal C}=(0,\eps^\beta)^2\subset \R^2$ with $\nu$ (resp. $\tau$) the unit outer normal vector (resp. the tangent vector) at $\partial {\mathcal C}$ and a boundary data $g_\eps\in H^1(\partial {\mathcal C}, \R^2)$ with {$|g_\eps|\leq 1$} on $\partial {\mathcal C}$.  We recall the definition of the Ginzburg-Landau energy density
\[ \den(u)= |\nabla u|^2+\frac{1}{\eps^2} (1-|u|^2)^2 \quad \textrm{for} \quad u\in H^1({\mathcal C},\R^2).\]
Let $u_\eps \in H^1({\mathcal C},\R^2)$ be a minimizer of the variational problem
\[ \min \left\{ \int_{\mathcal C} \den(u)\, dx\, :\, u=g_\eps\, \, \textrm{ on }\partial {\mathcal C}\right\} .\]
In the spirit of Bethuel, Brezis and H\'elein \cite{BBH_CalcVar}, it will be proved that $|u_\eps|$ is uniformly close to $1$ as $\eps\to 0$ under
certain energetic conditions. The same argument is used in \cite{Ignat-Kurzke}:\footnote{Theorem \ref{thm_main_GL} is an improvement of the results in \cite{BBH_CalcVar} in the case where the energy of the boundary data $g_\eps$ is no longer uniformly bounded.}

\begin{thm} \label{thm_main_GL}
Let $\beta\in (0,1)$. Let $\ka=\ka(\eps)>0$ be such that $\ka=o( |\log \eps|)$ as $\eps\to 0$. Assume that there exists $K_0>0$ such that 
\be
\label{asum1}
\int_{\partial {\mathcal C}} \bigg(|\partial_\tau g_\eps|^2+\frac{1}{\eps^2} (1-|g_\eps|^2)^2\bigg) \, d\Hh^1\leq \frac{K_0\kappa}{\eps^\beta}\quad \textrm{and} \quad \int_{{\mathcal C}} \den(u_\eps)\, dx\leq K_0\kappa,\ee
for all 
$\eps\in (0,\frac 1 2)$.
Then there exist $\eps_0(\beta)>0$ and $C(K_0)>0$ such that for all $0<\eps\leq \eps_0$ we have 
\[ \sup_{{\mathcal C}}\big||u_\eps|-1\big|\leq C \left( \frac{\ka}{|\log \eps|} \right)^{\frac16-},\] where $\frac 16-$ is any fixed positive number less that $\frac 16$. In particular, $|g_\eps|\geq 1/2$ on $\partial {\mathcal C}$ and $\degr(g_\eps, \partial {\mathcal C})=0$.

\end{thm}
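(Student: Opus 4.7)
My approach would follow the scheme of Bethuel--Brezis--H\'elein \cite{BBH_CalcVar}, adapted to the fact that here the trace energy of $g_\eps$ on $\partial \mathcal{C}$ is no longer uniformly bounded but is allowed to grow like $\ka/\eps^\beta$. As a preliminary step, I would establish two standard a priori bounds on the minimizer $u_\eps$. First, since $|g_\eps| \le 1$ on $\partial \mathcal{C}$, the truncation $u_\eps/\max(|u_\eps|,1)$ has no larger energy than $u_\eps$, so by minimality $|u_\eps| \le 1$ on $\mathcal{C}$. Second, interior elliptic regularity applied to the Euler--Lagrange equation $-\Delta u_\eps = (2/\eps^2) u_\eps(1-|u_\eps|^2)$, combined with scaling on balls of radius $\eps$, yields the pointwise bound $\|\nabla u_\eps\|_{L^\infty(\mathcal{C})} \le C/\eps$; the extension of this gradient estimate up to $\partial \mathcal{C}$ uses a straightening/reflection argument together with the trace energy bound from \eqref{asum1}.

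The core of the proof is a BBH-type clearing-out estimate. Argue by contradiction: suppose $|u_\eps(x_0)| = 1 - \eta$ at some $x_0 \in \mathcal{C}$. The gradient bound forces $|u_\eps| \le 1 - \eta/2$ on the ball $B(x_0, c\eta\eps)$, already producing a local potential contribution of order $\eta^4$ to $\int_\mathcal{C} \den(u_\eps)\, dx$. To upgrade this into a lower bound involving $|\log \eps|$, one runs a Fubini/annulus argument: on the concentric circles $\partial B(x_0, \rho)$ with $c\eta\eps \le \rho \le r_0$ (for a suitable free scale $r_0 \le \eps^\beta$), a pigeonhole on the Dirichlet energy selects a large family of ``good'' radii on which $\int_{\partial B(x_0,\rho)} |\nabla u_\eps|^2\, d\Hh^1$ is small compared to $K_0\ka/r_0$. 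The 1D Sobolev inequality $\mathrm{osc}(u_\eps|_{\partial B(x_0,\rho)}) \le (2\pi\rho)^{1/2}\|\nabla_\tau u_\eps\|_{L^2(\partial B(x_0,\rho))}$ then forces $|u_\eps|$ to have oscillation $o(\eta)$ along each good circle, so that $|u_\eps| \le 1 - \eta/4$ pointwise on a definite portion of each such circle. Integrating the resulting lower bound on the potential over the good subannulus and combining it with the local estimate, then optimizing the free scale $r_0$ against $\eps^\beta$, yields a quantitative inequality of the form $\eta^{6}\,|\log\eps| \lesssim K_0\ka$, which produces the stated bound $\eta \lesssim (\ka/|\log\eps|)^{1/6-}$; the small loss in the exponent reflects the Gagliardo--Nirenberg/Sobolev interpolation used in converting $L^2$-bounds on tangential derivatives into pointwise control of $|u_\eps|$.

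The main technical obstacle is the handling of points $x_0$ close to $\partial\mathcal{C}$: when $\dist(x_0, \partial\mathcal{C}) < \eps^\beta$, the Fubini argument cannot be carried out on full circles lying inside $\mathcal{C}$, and one has to replace portions of those circles by arcs of $\partial\mathcal{C}$. It is precisely at this stage that the first inequality in \eqref{asum1}, which permits $\int_{\partial\mathcal{C}} \den(g_\eps)\, d\Hh^1$ to blow up as $\ka/\eps^\beta$, is indispensable: one uses it to control the boundary contribution in the integration-by-parts and Fubini steps, and the assumption $\ka = o(|\log\eps|)$ is precisely what is needed to ensure that this boundary cost remains negligible compared to the logarithmic gain from the interior. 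Once the uniform estimate $\sup_\mathcal{C}\big||u_\eps|-1\big| \to 0$ is established, the remaining conclusions are immediate: $|g_\eps| \ge 1/2$ on $\partial\mathcal{C}$ follows from $g_\eps = u_\eps|_{\partial\mathcal{C}}$, and $\degr(g_\eps, \partial\mathcal{C}) = 0$ follows because $u_\eps/|u_\eps|$ provides a continuous $\Ss^1$-valued extension of $g_\eps/|g_\eps|$ from $\partial\mathcal{C}$ to the full cell $\mathcal{C}$.
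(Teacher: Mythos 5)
Your preliminary claim that $\|\nabla u_\eps\|_{L^\infty(\mathcal{C})}\le C/\eps$ extends up to $\partial\mathcal{C}$ ``via a straightening/reflection argument together with the trace energy bound from \eqref{asum1}'' is the crux of the problem, and it is false. The hypothesis \eqref{asum1} only provides $\int_{\partial\mathcal{C}}|\partial_\tau g_\eps|^2\,d\Hh^1\lesssim \ka/\eps^\beta$, i.e.\ an $L^2$ bound on the tangential derivative of the boundary data, which in no way prevents $|\partial_\tau g_\eps|$ from being much larger than $1/\eps$ on a small set. The paper explicitly records this in the remark after Lemma~\ref{lemma:maximum}: ``Unlike \cite{BBH_CalcVar}, the estimate $\|\nabla u_\eps\|_{L^\infty(\mathcal{C})}\le C/\eps$ does not hold in general here since it might already fail for the boundary data $g_\eps$.'' The paper therefore works with a weaker modulus of continuity, $|u_\eps(x)-u_\eps(y)|\le C\big(|x-y|/\eps + |x-y|^{\frac12-}/\eps^{\frac12-}\big)$, obtained by decomposing (after rescaling) $u_\eps=V+W$, where $V$ solves the inhomogeneous problem with zero boundary data (yielding a Lipschitz bound), and $W$ is harmonic with boundary trace $g_\eps$ (yielding only $C^{0,\frac12-}$ from the $H^1$ trace bound via $\|W\|_{\dot H^{3/2-}}\lesssim \|G\|_{\dot H^{1-}}$). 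Your clearing-out step, which deduces $|u_\eps|\le 1-\eta/2$ on the full ball $B(x_0,c\eta\eps)$, relies precisely on the Lipschitz bound and would fail near $\partial\mathcal{C}$; the inconsistency is visible in your own write-up, which later attributes the exponent loss ($1/6-$ instead of $1/4$) to ``converting $L^2$ bounds on tangential derivatives into pointwise control,'' in contradiction with the claim that you already have pointwise $C/\eps$ control.

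Beyond this gap, the route you sketch differs from the paper's. Where you run a BBH-style good-circle/annulus argument (pigeonholing on the Dirichlet energy over concentric circles, then a 1D Sobolev oscillation estimate), the paper combines two Pohozaev identities. Lemma~\ref{lem_normal} applies Pohozaev on the full cell $\mathcal{C}$ to convert \eqref{asum1} into a bound on the normal derivative $\int_{\partial\mathcal{C}}|\partial_\nu u_\eps|^2\lesssim\ka/\eps^\beta$, so that the full boundary energy density $\den(u_\eps)$ is controlled on $\partial\mathcal{C}$. The second lemma then performs a pigeonhole in $r\in(\eps^{\beta_1},\eps^{\beta_2})$ to find a scale $r_0$ where $\int_{\partial(T(x_0,r_0)\cap\mathcal{C})}\den(u_\eps)\lesssim\ka/(r_0|\log\eps|)$ --- this is where the $|\log\eps|$ gain appears, from integrating over a logarithmically wide range of scales --- and a second Pohozaev on the subdomain $T(x_0,r_0)\cap\mathcal{C}$ converts this into the potential bound $\frac1{\eps^2}\int_{T(x_0,r_0)\cap\mathcal{C}}(1-|u_\eps|^2)^2\lesssim\ka/|\log\eps|$. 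Finally the $C^{0,\frac12-}$ modulus of continuity (not the Lipschitz bound) is fed into this: choosing $A\sim(1-|u_\eps(x_0)|^2)^{2+}$ so that $u_\eps$ cannot escape the bad set on $T(x_0,A\eps)$ gives $(1-|u_\eps(x_0)|^2)^{6+}\lesssim\ka/|\log\eps|$, explaining the $1/6-$ exponent. To repair your proposal, you would need to replace the false boundary Lipschitz bound by the correct $C^{0,\frac12-}$ estimate, and verify that your circle-by-circle Fubini argument still produces a usable logarithmic gain with this weaker modulus; the paper's Pohozaev route sidesteps this issue entirely.
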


\begin{rem}
In the setting of the proof of Theorem \ref{thm_comp} we take $\ka=1/(\delta|\log \delta|)$.

\end{rem}


The proof of Theorem \ref{thm_main_GL} is done by using the following results:

\begin{lem}
\label{lem_normal}
Under the hypothesis of Theorem \ref{thm_main_GL}, we have
$$\int_{\partial {\mathcal C}} \den(u_\eps)\, dx\leq \frac{C\,K_0\kappa}{\eps^\beta},$$
where $C>0$ is some universal constant. Up to a change of $K_0$ in Theorem \ref{thm_main_GL}, we will always assume that the above $C=1$.
\end{lem}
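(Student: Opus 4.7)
The plan is to establish the boundary estimate via a Pohozaev-type identity. Since $u_\eps = g_\eps$ on $\partial \mathcal{C}$, the tangential derivative satisfies $\partial_\tau u_\eps = \partial_\tau g_\eps$ and the potential $\frac{1}{\eps^2}(1-|u_\eps|^2)^2 = \frac{1}{\eps^2}(1-|g_\eps|^2)^2$ on the boundary; both are already controlled by the boundary part of \eqref{asum1}. Hence the only missing ingredient is a bound on the normal derivative $\partial_\nu u_\eps$ on $\partial \mathcal C$.

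As a minimizer of the Dirichlet energy, $u_\eps$ solves the Ginzburg--Landau equation $-\Delta u_\eps = \frac{2}{\eps^2} u_\eps(1 - |u_\eps|^2)$ in $\mathcal{C}$. The plan is to take the scalar product with $(x - x_0) \cdot \nabla u_\eps$, where $x_0 = (\eps^\beta/2,\eps^\beta/2)$ is the center of the cell, and rewrite both sides in divergence form; a direct computation yields the Pohozaev-type identity
\[ \mathrm{div}\!\left( \tfrac{1}{2}(x-x_0)|\nabla u_\eps|^2 - ((x-x_0)\cdot \nabla u_\eps)\, \nabla u_\eps + \tfrac{1}{2\eps^2}(x-x_0)(1-|u_\eps|^2)^2 \right) = \frac{1}{\eps^2}(1-|u_\eps|^2)^2. \]
Integrating over $\mathcal{C}$, I then decompose $\nabla u_\eps|_{\partial\mathcal{C}}$ into its normal and tangential components. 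The key geometric observation is that because $x_0$ is the center of the square, on each side of $\partial \mathcal{C}$ one has $(x-x_0) \cdot \nu = \eps^\beta/2$ (constant), while $|(x-x_0) \cdot \tau| \leq \eps^\beta/2$. After dropping the nonnegative volume integral that has been moved to the LHS, this produces the inequality
\[ \frac{\eps^\beta}{4} \int_{\partial \mathcal{C}}\! |\partial_\nu u_\eps|^2 \, d\h^1 \leq \frac{\eps^\beta}{4} \int_{\partial \mathcal{C}}\! |\partial_\tau g_\eps|^2 \, d\h^1 + \frac{\eps^\beta}{4\eps^2} \int_{\partial \mathcal{C}}\! (1-|g_\eps|^2)^2 \, d\h^1 + \frac{\eps^\beta}{2} \int_{\partial \mathcal{C}}\! |\partial_\tau g_\eps|\, |\partial_\nu u_\eps| \, d\h^1. \]

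The main obstacle is the last cross term, which is the only one on the RHS still featuring the unknown $\partial_\nu u_\eps$. The plan is to absorb it into the left-hand side via a weighted Cauchy--Schwarz inequality of the form $\tfrac{\eps^\beta}{2}|\partial_\tau g_\eps|\,|\partial_\nu u_\eps| \leq \tfrac{\eps^\beta}{8}|\partial_\nu u_\eps|^2 + \tfrac{\eps^\beta}{2}|\partial_\tau g_\eps|^2$, which leaves a positive coefficient in front of $\int |\partial_\nu u_\eps|^2$ on the LHS. This produces
\[ \int_{\partial \mathcal{C}} |\partial_\nu u_\eps|^2 \, d\h^1 \leq C \int_{\partial \mathcal{C}} \!\Big( |\partial_\tau g_\eps|^2 + \tfrac{1}{\eps^2}(1-|g_\eps|^2)^2 \Big)\, d\h^1 \leq \frac{C K_0 \kappa}{\eps^\beta}, \]
where the last inequality is exactly the boundary hypothesis of \eqref{asum1}. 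Adding back the tangential and potential contributions $\int_{\partial\mathcal{C}} |\partial_\tau u_\eps|^2 + \frac{1}{\eps^2}(1-|u_\eps|^2)^2\, d\h^1$, which equal those of $g_\eps$ and are controlled by the same hypothesis, concludes the lemma with a universal constant $C$.
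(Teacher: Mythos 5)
Your proof is correct and takes essentially the same route as the paper: both derive the boundary estimate from the Pohozaev identity for the Ginzburg--Landau equation, decompose $\nabla u_\eps$ into normal and tangential parts on $\partial\mathcal C$, and absorb the cross term $\partial_\nu u_\eps\cdot\partial_\tau g_\eps$ by a weighted Young/Cauchy--Schwarz inequality. The one small variant is that you discard the nonnegative volume term $\frac{1}{\eps^2}\int_{\mathcal C}(1-|u_\eps|^2)^2\,dx$ outright, whereas the paper instead bounds it (and the boundary potential term) by $CK_0\kappa$ using the second inequality in \eqref{asum1}; your version is slightly leaner in that it only uses the boundary part of \eqref{asum1}.
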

\proof{} Since $u_\eps$ is a minimizer of $\den$, then $u_\eps$ is a solution of
\begin{equation}\label{eq:euler}-\Delta u_\eps=\frac{2}{\eps^2}u_\eps(1-|u_\eps|^2) \quad \textrm{in} \quad  {\mathcal C}.\end{equation} 
We use the Pohozaev identity for $u_\eps$. More precisely, multiplying the equation by $(x-x_0)\cdot \nabla u_\eps$ and integrating by parts,
we deduce:
\begin{align}
\nonumber
\MoveEqLeft \bigg|\frac{1}{\eps^2}\int_{ {\mathcal C}} u_\eps(1-|u_\eps|^2) \cdot \bigg((x-x_0)\cdot \nabla u_\eps\bigg)\, dx \bigg|\\
\label{unu}&=\bigg|\frac{1}{2\eps^2}\int_{ {\mathcal C}} (1-|u_\eps|^2)^2 \, dx -\frac{1}{4\eps^2}\int_{\partial {\mathcal C}} (x-x_0)\cdot \nu (1-|g_\eps|^2)^2 \, d\Hh^1\bigg| \stackrel{\eqref{asum1}}{\leq} CK_0 \kappa,\\
\nonumber
\MoveEqLeft \int_{ {\mathcal C}} \Delta u_\eps \cdot \bigg((x-x_0)\cdot \nabla u_\eps\bigg)\, dx \\
\label{doi}&=\int_{ \partial {\mathcal C}} \bigg( -\frac 1 2 (x-x_0)\cdot \nu |\nabla u_\eps|^2
+\frac{\partial u_\eps}{\partial \nu}\cdot \frac{\partial u_\eps}{\partial (x-x_0)} 
\bigg)\, d\Hh^1,
\end{align}
where $\ds \frac{\partial u_\eps}{\partial (x-x_0)}=\nabla u_\eps\cdot (x-x_0)$. For $x\in \partial {\mathcal C}$, we have $x-x_0=\eps^\beta(\nu+s\tau)$ with $s\in (-1,1)$, $u_\eps(x)=g_\eps(x)$ and we write (as complex numbers)
$\nabla u_\eps=\nabla u_{1,\eps}+i\nabla u_{2,\eps}=\frac{\partial u_\eps}{\partial \nu}\nu+\frac{\partial g_\eps}{\partial \tau}\tau$ on $\partial {\mathcal C}$. By \eqref{eq:euler}, \eqref{unu} and \eqref{doi}, it follows by Young's inequality:
$$\frac{CK_0\kappa}{\eps^\beta} \geq \int_{ \partial {\mathcal C}} \bigg(\frac 1 2 \big|\frac{\partial u_\eps}{\partial \nu}\big|^2
-\frac 1 2 \big|\frac{\partial g_\eps}{\partial \tau}\big|^2+s \frac{\partial u_\eps}{\partial \nu}\cdot \frac{\partial g_\eps}{\partial \tau}\bigg)\, d\Hh^1 
\geq \int_{ \partial {\mathcal C}} \bigg(\frac 1 4 \big|\frac{\partial u_\eps}{\partial \nu}\big|^2
-\frac 3 2 \big|\frac{\partial g_\eps}{\partial \tau}\big|^2\bigg)\, d\Hh^1.$$
Therefore, by \eqref{asum1}, we deduce that $\int_{ \partial {\mathcal C}}  \big|\frac{\partial u_\eps}{\partial \nu}\big|^2\, d\Hh^1\leq \frac{CK_0\kappa}{\eps^\beta}$ and the conclusion follows.
\qed

\bigskip

In the following, we denote by $T(x,r)$ the square centered at $x$ of side of length $2r$.

\begin{lem}
Fix $1>\bun>\bd>\beta>0$. Under the hypothesis of Theorem \ref{thm_main_GL}, there exist $\eps_0=\eps_0(\bd,\beta)>0$ and $C=C(K_0)>0$ such that for every $x_0\in {\mathcal C}$ and all $0<\eps\leq \eps_0$, we can find $r_0=r_0(\eps)\in (\eps^{\bun}, \eps^{\bd})$
such that 
\be
\label{ineq1}
\int_{\partial \big(T(x_0,r_0)\cap {\mathcal C}\big)} \den(u_\eps)\, d\Hh^1\leq \frac{C\kappa}{r_0 |\log \eps|}.\ee
Moreover, we have 
\be \label{ineq2}
\frac{1}{\eps^2} \int_{T(x_0,r_0)\cap {\mathcal C}} (1-|u_\eps|^2)^2\, dx \leq \frac{\tilde{C}\ka}{|\log \eps|}\ee
for some $\tilde{C}>0$ depending on $K_0$.
\end{lem}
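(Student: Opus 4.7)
The plan is to follow the classical Fubini/mean-value slicing argument on concentric squares combined with a Pohozaev identity on the selected square, in the spirit of the arguments of Bethuel--Brezis--H\'elein \cite{BBH_CalcVar}. The factor $1/|\log\eps|$ appearing in \eqref{ineq1}--\eqref{ineq2} will be produced by averaging over the logarithmically long interval of radii $r\in(\eps^{\bun},\eps^{\bd})$; once \eqref{ineq1} is in hand, \eqref{ineq2} will follow from a Pohozaev identity on $T(x_0,r_0)\cap{\mathcal C}$ exactly as in the proof of Lemma~\ref{lem_normal}, but with that smaller domain in place of $\mathcal C$.

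For \eqref{ineq1}, I would introduce the sup-distance to $x_0$, namely $\phi(x):=\max(|x_1-x_{0,1}|,|x_2-x_{0,2}|)$, so that $|\nabla\phi|=1$ almost everywhere and $\{\phi=r\}=\partial T(x_0,r)$. The coarea formula applied to $\phi$ gives
\[ \int_{\eps^{\bun}}^{\eps^{\bd}} \int_{\partial T(x_0,r)\cap {\mathcal C}} \den(u_\eps)\, d\Hh^1\, dr \le \int_{\mathcal C} \den(u_\eps)\, dx \stackrel{\eqref{asum1}}{\le} K_0\,\ka. \]
Writing $dr = r\,\frac{dr}{r}$ and using $\int_{\eps^{\bun}}^{\eps^{\bd}}\frac{dr}{r}=(\bun-\bd)|\log\eps|$, a standard log-scale mean-value argument furnishes $r_0\in(\eps^{\bun},\eps^{\bd})$ with
\[ r_0 \int_{\partial T(x_0,r_0)\cap {\mathcal C}} \den(u_\eps)\, d\Hh^1 \le \frac{K_0\,\ka}{(\bun-\bd)\,|\log\eps|}. \]
This handles the part of $\partial(T(x_0,r_0)\cap{\mathcal C})$ lying in the interior of $\mathcal C$. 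The remaining part $T(x_0,r_0)\cap\partial{\mathcal C}$ is controlled by Lemma~\ref{lem_normal}: its energy is $\le K_0\ka/\eps^{\beta}$, and since $\bd>\beta$ one has $r_0|\log\eps|\le \eps^{\bd}|\log\eps|\le \eps^{\beta}$ for $\eps\le \eps_0(\bd,\beta)$ small enough, so this contribution also fits in $C\ka/(r_0|\log\eps|)$. Summing the two gives \eqref{ineq1}.

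For \eqref{ineq2}, set $D:=T(x_0,r_0)\cap{\mathcal C}$; this is a Lipschitz domain on which $|x-x_0|\le\sqrt{2}\,r_0$. Testing the Euler--Lagrange equation \eqref{eq:euler} against $(x-x_0)\cdot\nabla u_\eps$ on $D$ (exactly as in the proof of Lemma~\ref{lem_normal}, but on $D$ rather than $\mathcal C$) yields a Pohozaev identity whose right-hand side is pointwise bounded by $C r_0\,\den(u_\eps)$ on $\partial D$; hence
\[ \frac{1}{\eps^2}\int_{D}(1-|u_\eps|^2)^2\,dx \le C\, r_0 \int_{\partial D} \den(u_\eps)\, d\Hh^1. \]
Plugging in \eqref{ineq1} then cancels the $r_0$ factor and produces \eqref{ineq2}.

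I expect the main subtlety to lie in the log-scale averaging: the crucial $1/|\log\eps|$ gain in \eqref{ineq1}--\eqref{ineq2} comes precisely from the fact that the radial interval $(\eps^{\bun},\eps^{\bd})$ has logarithmic length of order $|\log\eps|$, and it is this gain that ultimately allows \eqref{ineq2} to be a vanishing bound, even though the total energy is only assumed uniformly bounded by $K_0\ka$. The use of square (rather than round) annuli is handled cleanly by the sup-distance $\phi$ above, and the corners of $\partial D$ are harmless in the Pohozaev computation, which relies only on the Lipschitz regularity of $\partial D$ and on \eqref{eq:euler} holding in the interior of $\mathcal C$.
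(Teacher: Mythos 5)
Your proposal is correct and follows essentially the same strategy as the paper: a mean-value selection of a good radius $r_0$ in the logarithmically long interval $(\eps^{\bun},\eps^{\bd})$ (producing the $1/|\log\eps|$ gain), a separate control of the $\partial\mathcal C$-portion of $\partial(T(x_0,r_0)\cap\mathcal C)$ via Lemma~\ref{lem_normal} and the condition $\bd>\beta$, and then a Pohozaev identity on $T(x_0,r_0)\cap\mathcal C$ with the weight $(x-x_0)\cdot\nabla u_\eps$ and the bound $|x-x_0|\le\sqrt2\,r_0$. The only cosmetic difference is that you phrase the selection of $r_0$ as a direct coarea-plus-averaging argument, whereas the paper runs the equivalent computation as a proof by contradiction.
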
    

\proof{} We distinguish two steps:

\medskip

{\it Step 1. Proof of \eqref{ineq1}.} Fix $\eps_0\in (0, \frac 12)$ (depending on $\bd-\beta$) such that $\eps_0^{\bd-\beta} |\log \eps_0|\leq 1/2$.
Assume by contradiction that for every $C\geq K_0$ there exist $x\in {\mathcal C}$ and $\eps\in (0, \eps_0)$ such that for every $r\in (\eps^{\bun}, \eps^{\bd})$, we have 
\[ \int_{\partial \big(T(x_0, r)\cap {\mathcal C}\big)} \den(u_\eps)\, d\Hh^1\geq \frac{C\kappa}{r |\log \eps|}. \]
By \ Lemma \ref{lem_normal}, we have 
\[ \int_{\partial {\mathcal C}} \den(u_\eps)\, d\Hh^1\leq \frac{ K_0\kappa}{\eps^\beta}\leq \frac{K_0\kappa}{2 \eps^{\bd} |\log \eps|}\leq \frac{C\kappa}{2r |\log \eps|}, \quad \forall r\in (\eps^{\bun}, \eps^{\bd}) .\]
Therefore, we deduce that
\[ \int_{\partial T(x_0, r)\cap {\mathcal C}} \den(u_\eps)\, d\Hh^1\geq \int_{\partial  \big(T(x_0, r)\cap {\mathcal C}\big)} \den(u_\eps)\, d\Hh^1 - \int_{\partial {\mathcal C}} \den(u_\eps)\, d\Hh^1\geq \frac{C\kappa}{2r |\log \eps|}. \]
Integrating in $r\in (\eps^{\bun}, \eps^{\bd})$, we obtain
\begin{align*}
\MoveEqLeft K_0\ka\stackrel{\eqref{asum1}}{\geq} \int_{{\mathcal C}} \den(u_\eps)\, dx\geq \int_{T(x_0, \eps^{\bd})\cap {\mathcal C}} \den(u_\eps)\, dx \\
& \geq \int_{\eps^{\bun}}^{\eps^{\bd}} 
dr \int_{\partial T(x_0, r)\cap {\mathcal C}} \den(u_\eps)\, d\Hh^1\geq \frac{C(\bun-\bd)\ka}{2}
\end{align*}
which is a contradiction with the fact that $C$ can be arbitrary large.

\bigskip

{\it Step 2. Proof of \eqref{ineq2}.} Let $x_0\in {\mathcal C}$. 
We use the same argument as at Lemma \ref{lem_normal} involving a Pohozaev identity for the solution $u_\eps$ of
\eqref{eq:euler} in the domain $$\cd:=T(x_0, r_0)\cap {\mathcal C}$$ where $r_0$ is given at \eqref{ineq1}. Multiplying the equation by $(x-x_0)\cdot \nabla u_\eps$ and integrating by parts,
we deduce:
\begin{align*}
\MoveEqLeft \int_{ \cd} -\Delta u_\eps \cdot \bigg((x-x_0)\cdot \nabla u_\eps\bigg)\, dx \\
&=\int_{ \partial \cd} \bigg( \frac 1 2 (x-x_0)\cdot \nu |\nabla u_\eps|^2
-\frac{\partial u_\eps}{\partial \nu}\cdot \frac{\partial u_\eps}{\partial (x-x_0)} 
\bigg)\, d\Hh^1,\\
\MoveEqLeft \frac{1}{\eps^2}\int_{ \cd} u_\eps(1-|u_\eps|^2) \cdot \bigg((x-x_0)\cdot \nabla u_\eps\bigg)\, dx \\
&=\frac{1}{2\eps^2}\int_{ \cd} (1-|u_\eps|^2)^2 \, dx -\frac{1}{4\eps^2}\int_{\partial \cd} (x-x_0)\cdot \nu (1-|u_\eps|^2)^2 \, d\Hh^1.
\end{align*}
Since $|x-x_0|\leq \sqrt{2}r_0$ on $\partial \cd$, by \eqref{ineq1},
we deduce that \eqref{ineq2} holds true.
\qed

\begin{lem}\label{lemma:maximum} Under the hypothesis of Theorem \ref{thm_main_GL}, we have that 
$\|u_\eps\|_{L^\infty({\mathcal C})}\leq 1$ and \[
|u_\eps(x)-u_\eps(y)|\le C\left( \frac{|x-y|}{\eps}+ \frac{|x-y|^{\frac12-}}{\eps^{\frac12-}}\right), \quad \forall x, y\in {\mathcal C},
\]
where $C\geq 1$ is a universal constant (independent of $K_0$) and $\frac 12-$ is some positive number less than $\frac 12$.
\end{lem}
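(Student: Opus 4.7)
The two assertions of the lemma are independent and I would treat them separately.

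\emph{Part 1: the $L^\infty$ bound.} The minimizer $u_\eps$ satisfies the Euler--Lagrange equation~\eqref{eq:euler}, namely $-\Delta u_\eps = \frac{2}{\eps^2} u_\eps(1-|u_\eps|^2)$ in $\mathcal C$. A direct computation gives
\[
\Delta |u_\eps|^2 = 2|\nabla u_\eps|^2 - \frac{4}{\eps^2}|u_\eps|^2\bigl(1-|u_\eps|^2\bigr),
\]
so $|u_\eps|^2$ is subharmonic on the open set $\{|u_\eps|^2 > 1\}$. Combined with the boundary condition $|u_\eps|^2 = |g_\eps|^2 \leq 1$ on $\partial \mathcal C$, the weak maximum principle forces $|u_\eps|\leq 1$ throughout $\mathcal C$.

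\emph{Part 2: the modulus of continuity.} The strategy is to rescale by $\eps$ and apply classical elliptic regularity. Setting $v(y):=u_\eps(\eps y)$ on $D_\eps := \mathcal C/\eps = (0,\eps^{\beta-1})^2$, the function $v$ satisfies $-\Delta v = 2v(1-|v|^2)$ with $\|\Delta v\|_{L^\infty(D_\eps)}\leq 2$, while $|v|\leq 1$ up to the boundary by Part~1. In rescaled variables, the sought inequality reads
\[
|v(\tilde x)-v(\tilde y)|\leq C\bigl(|\tilde x-\tilde y|+|\tilde x-\tilde y|^{1/2-}\bigr),\qquad \tilde x,\tilde y\in D_\eps,
\]
with universal $C$. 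For $|\tilde x-\tilde y|\geq 1$ this is automatic since $|v|\leq 1$, so only small separations need attention.

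The interior Lipschitz piece $|x-y|/\eps$ comes from standard regularity: a $W^{2,p}$ estimate with $p>2$ on a unit ball $B_1(y_0)\subset D_\eps$, followed by Morrey embedding, yields $|\nabla v(y_0)|\leq C(\|\Delta v\|_\infty+\|v\|_\infty)\leq C$; rescaling back gives $|\nabla u_\eps(x)|\leq C/\eps$ whenever $\dist(x,\partial\mathcal C)\geq \eps$, and hence the Lipschitz bound along any segment staying in this interior region. The Hölder piece $|x-y|^{1/2-}/\eps^{1/2-}$ comes from a global $C^{0,\alpha}$ estimate on $v$ up to $\partial D_\eps$ for some fixed $\alpha\in(0,1/2)$. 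Since $D_\eps$ is a convex polygonal (in particular Lipschitz) domain with only $\pi/2$ corners, the bounded source term and the bounded boundary data $|v|\leq 1$ allow the construction of explicit sub-/supersolution barriers at every boundary point — including the four right-angle corners — producing a boundary Hölder modulus with a constant depending only on $\alpha$ and the geometry of a unit square. Combining this boundary Hölder bound with the interior Lipschitz bound (used on the part of the segment lying in the interior region) yields the full estimate on pairs $\tilde x,\tilde y$ with $|\tilde x-\tilde y|<1$.

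\emph{Main obstacle.} The delicate step is producing the boundary Hölder estimate with a \emph{universal} constant, relying only on $|g_\eps|\leq 1$: the $H^1$ bound in~\eqref{asum1} blows up with $\eps$ and cannot enter. The barrier construction at the right-angle corners of $\mathcal C$ is the crucial ingredient; the precise value of the positive number $\tfrac12-$ is dictated by this construction and is not further optimized here, which is why the statement only commits to ``some positive number less than $\tfrac12$.''
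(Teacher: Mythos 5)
Your Part~1 is fine and is essentially the paper's argument: write $\rho=1-|u_\eps|^2$, observe from the Euler--Lagrange equation that $-\Delta\rho+\frac{4}{\eps^2}|u_\eps|^2\rho\ge 0$ with $\rho\ge 0$ on $\partial\mathcal C$, and invoke the maximum principle.

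Part~2, however, contains a genuine gap, and it is precisely at the spot you flag as the ``main obstacle.'' You claim that sub-/supersolution barriers give a boundary H\"older modulus for $v$ with a universal constant ``relying only on $|g_\eps|\le 1$.'' This is false: the modulus of continuity of the solution of a Dirichlet problem up to the boundary cannot be controlled by an $L^\infty$ bound on the boundary data alone. A barrier at a boundary point $x_0$ compares $v(x)$ with $g(x_0)\pm(\text{barrier})$; without a modulus of continuity on $g$, the term $g(x_0)$ cannot be replaced by $g$ at nearby boundary points, and a harmonic function with merely bounded (possibly discontinuous) boundary data need not be continuous up to the boundary at all. So the key H\"older piece $|x-y|^{1/2-}/\eps^{1/2-}$ does not follow by your route. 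Moreover, you assert the $H^1$ bound on $g_\eps$ from~\eqref{asum1} ``cannot enter'' the estimate; the paper's proof shows it must and does, and this is where the structure of the regime matters.

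The paper's argument: rescale by $\eps^\beta$ (not by $\eps$), setting $U(x)=u_\eps(\eps^\beta x)$ on the \emph{unit} square $\Omega_0=(0,1)^2$, and split $U=V+W$ with $-\Delta V=\frac{2}{\eps^{2(1-\beta)}}U(1-|U|^2)$, $V=0$ on $\partial\Omega_0$, and $W$ harmonic with $W=G:=g_\eps(\eps^\beta\cdot)$ on $\partial\Omega_0$. The Poisson part $V$ yields the Lipschitz term via Gagliardo--Nirenberg, $\|\nabla V\|_{L^\infty}\lesssim\|V\|_{L^\infty}^{1/2}\|\Delta V\|_{L^\infty}^{1/2}\lesssim\eps^{-(1-\beta)}$. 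For the harmonic part $W$ the boundary $H^1$ bound~\eqref{asum1} enters \emph{after rescaling}: $\int_{\partial\Omega_0}|\partial_\tau G|^2\,d\Hh^1=\eps^\beta\int_{\partial\mathcal C}|\partial_\tau g_\eps|^2\,d\Hh^1\le K_0\kappa$, so $\|W\|_{\dot H^{3/2-}(\Omega_0)}\lesssim(K_0\kappa)^{1/2}$ and then $|W(x)-W(y)|\lesssim(K_0\kappa)^{1/2}|x-y|^{1/2-}$ by Sobolev embedding. Scaling back and collecting the powers of $\eps$, the $K_0$-dependence appears only through the factor $(K_0\kappa)^{1/2}\eps^{(1-\beta)/2-}$, which is $o(1)$ because $\kappa=o(|\log\eps|)$; this is how the final constant becomes universal. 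In short: the $H^1$ information on $g_\eps$ is not avoidable but is exactly tamed by the scaling and the assumption $\kappa=o(|\log\eps|)$, which your proposal leaves unexploited.
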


\begin{rem}
Unlike \cite{BBH_CalcVar}, the estimate $\|\nabla u_\eps\|_{L^\infty({\mathcal C})}\leq C/\eps$ does not hold in general here since it might already fail for the boundary data $g_\eps$ (due to \eqref{asum1}).
Therefore, the estimate given by Lemma \ref{lemma:maximum} is the natural one in our situation.
\end{rem}

\proof{}
Let $\rho=1-|u_\eps|^2$. Then \eqref{eq:euler} implies that $-\Delta \rho+\frac{4}{\eps^2}|u_\eps|^2\rho\geq 0$ in ${\mathcal C}$ and $\rho=1-|g_\eps|^2\geq 0$ on $\partial {\mathcal C}$.
Thus, the maximal principle implies that $\rho\geq 0$, i.e., $|u_\eps|\leq 1$ on $\partial {\mathcal C}$. 
For the second estimate, we do the rescaling $U(x)=u_\eps(\eps^\beta x)$ for $x\in\Om_0:=(0,1)^2$ and $G(x)=g_\eps(\eps^\beta x)$ for $x\in \partial \Om_0$ and get the equation
$$\textrm{$-\Delta U=\frac{2}{\eps^{2(1-\beta)}}U(1-|U|^2) \quad \textrm{in} \quad  \Om_0$ with $U=G$ on $\partial \Om_0$.}$$ Then we write $U=V+W$ with $-\Delta V=\frac{2}{\eps^{2(1-\beta)}}U(1-|U|^2) \quad \textrm{in} \quad  \Om_0$ and $V=0$ on $\partial \Om_0$ and $\Delta W=0$ in $\Om_0$ with  
$W=G$ on $\partial \Om_0$. In particular, $-\Delta |W|^2=-2|\nabla W|^2\leq 0$ in $\Omega_0$; since $|W|\leq 1$ on $\partial \Om_0$, the maximal principle implies that $|W|\leq 1$ in $\Om_0$. Due to $|U|\leq 1$, we deduce that $|V|\leq 2$ in $\Om_0$. Using the Gagliardo-Nirenberg inequality, we have
\[
\|\nabla V\|_{L^\infty(\Om_0)}\le C_0 \|V\|_{L^\infty(\Om_0)}^{\frac12} \|\Delta V\|_{L^\infty(\Om_0)}^{\frac12},
\]
so that we obtain
\[ \|\nabla V\|_{L^\infty(\Om_0)}\leq C/\eps^{1-\beta}. \]
In order to have the $C^{0,1/2-}$ estimate for $W$, we start by noting that 
\[ \int_{\partial \Om_0} |\partial_\tau G|^2 \, d\h^1=\eps^\beta \int_{\partial {\mathcal C}} |\partial_\tau g_\eps|^2 \, d\h^1\stackrel{\eqref{asum1}}{\leq} K_0 \kappa. \]
So, by regularity theory for harmonic 
functions, we deduce:\footnote{Let us consider for simplicity the following $2D$ situation: $\Delta W=0$ for $x_2\neq 0$ and $W=G$ for $x_2=0$. Passing in Fourier transform in $x_1$, we obtain that ${\mathcal F}(W)(\xi_1, x_2)=e^{-|\xi_1|\, |x_2|} {\mathcal F}(G)(\xi_1)$. Therefore, the Fourier transform in both variables of $\R^2$ of $W$ is given by 
 $\hat{W}(\xi)={\mathcal F}(G)(\xi_1)\int_{\R} e^{-i\xi_2 x_2}  e^{-|\xi_1| |x_2|} \, dx_2={\mathcal F}(G)(\xi_1)\frac{|\xi_1|}{|\xi|^2}$
because ${\mathcal F}(x_1\mapsto \frac 1{1+x_1^2})(\xi_1)=e^{-|x_1|}$. Therefore, $\|W\|_{\dot{H}^{\frac32-}(\R^2)}\sim \|G\|_{\dot{H}^{1-}
(\R)}$.} 
$$ \|W\|_{\dot{H}^{\frac32-}(\Om_0)}\leq C_0\|G\|_{\dot{H}^{1-}(\partial \Om_0)}\leq C (K_0 \kappa)^{1/2}. $$
By Sobolev embedding $H^{\frac32-}(\Om_0)\subset C^{0,\frac12-}(\Om_0)$, it follows:
$$
|W(x)-W(y)|\le C|x-y|^{\frac12-} \|W\|_{\dot{H}^{\frac32-}(\Om_0)}\le C( K_0\kappa )^{1/2} |x-y|^{\frac12-},\quad \forall (x,y) \in \Omega_0^2.
$$
Therefore, we obtain
\begin{equation*} \begin{split}|U(x)-U(y)|&\leq |V(x)-V(y)|+ |W(x)-W(y)|\\&\leq C\left(\frac{ |x-y|}{\eps^{1-\beta}} + \frac{|x-y|^{\frac12-}}{\eps^{\frac{1-\beta}{2}-}} (K_0 \kappa)^{1/2} \eps^{\frac{1-\beta}{2}-} \right),
\quad \forall (x,y) \in \Omega_0^2. \end{split}\end{equation*}  Scaling back, we obtain the desired estimate for $u_\eps$ in ${\mathcal C}$ since $(K_0 \kappa)^{1/2} \eps^{\frac{1-\beta}{2}-}=o(1)$. 
\qed

\proof[Proof of Theorem \ref{thm_main_GL}.] We will show that
\[ \||u_\eps|^2-1\|_{L^\infty({\mathcal C})}\leq C \left( \frac{\ka}{|\log \eps|} \right)^{\frac16-}. \]
Let $x_0\in {\mathcal C}$ such that $|u_\eps(x_0)|<1$. Set $0<A<1$ such that
\[ 2C(2A+(2A)^{\frac12-})=\frac{(1-|u_\eps(x_0)|^2)}{2}>0, \]
where $C$ is given by Lemma \ref{lemma:maximum}. In particular, $A^{\frac12-}\geq A\geq {C}_1 (1-|u_\eps(x_0)|^2)^{2+} $. By 
Lemma~\ref{lemma:maximum}, we obtain for any $y\in T(x_0,A\eps)\cap {\mathcal C}$: $|y-x_0|\leq 2A\eps$ and
\[ 1-|u_\eps(y)|^2\geq 1-|u_\eps(x_0)|^2-2C(2A+(2A)^{\frac12-})=\frac{1-|u_\eps(x_0)|^2}{2}. \]
Hence, for small $\eps$, we have $A\eps<\eps\leq \eps^{\beta_1}\leq r_0$ (with $r_0$ given in \eqref{ineq1}) and
\begin{align*}
\MoveEqLeft \frac{\tilde{C}\kappa \eps^2}{|\log \eps|}\stackrel{\eqref{ineq2}}{\geq} \int_{T(x_0, A\eps)\cap {\mathcal C}}(1-|u_\eps(y)|^2)^2\,dy \nonumber \\
& \geq \frac 1{16} A^2\eps^2 (1-|u_\eps(x_0)|^2)^2 = \frac 1{16}C_1^2 \eps^2 (1-|u_\eps(x_0)|^2)^{6+}. \nonumber
\end{align*}
Thus, we conclude that
\[ (1-|u_\eps(x_0)|^2)^{6+}\leq  \hat{C}\frac{\ka}{|\log \eps|}. \qedhere\]


\medskip

\medskip

\textbf{Acknowledgments.} The last author is partially supported by the French ANR projects SchEq ANR-12-JS-0005-01 and GEODISP ANR-12-BS01-0015-01.

\end{document}